\numberwithin{algorithm}{section}
\theoremstyle{definition}
\newtheorem{theorem}{Theorem}[section]
\newtheorem{proposition}[theorem]{Proposition}
\newtheorem{definition}[theorem]{Definition}
\newtheorem{lemma}[theorem]{Lemma}
\newtheorem{remark}[theorem]{Remark}
\newtheorem{corollary}[theorem]{Corollary}
\newtheorem{example}[theorem]{Example}
\definecolor{fgreen}{RGB}{34,139,34}
\newcommand{\sd}{{{\mathrm{sd}}}}
\newcommand{\zr}{{\mathbb R}}
\newcommand{\zz}{{\mathbb Z}}
\newcommand{\zm}{{\mathbb M}}
\title{Discrete Morse theory for open complexes}
\author{Kevin P.~Knudson, Nicholas A.~Scoville}
\address{Department of Mathematics, University of Florida, Gainesville, FL 32611}
\address{Department of Mathematics and Computer Science, Ursinus College, Collegeville, PA 19426}
\date{\today}
\email{kknudson@ufl.edu}
\email{nscoville@ursinus.edu}
\thanks{The second author was supported by an AMS-Simons Research Enhancement Grant for PUI Faculty.} % \thanks entries are to acknowledge grants.
\keywords{discrete Morse theory, Borel-Moore homology, open simplicial complex, order complex }
\subjclass[2020]{(Primary) 57Q70;  (Secondary) 55U10, 55N35}
\begin{document}\tikzset{->-/.style={decoration={
  markings,
  mark=at position .5 with {{->}}},postaction={decorate}}}

\begin{abstract}
We develop a discrete Morse theory for open simplicial complexes $K=X\setminus T$ where $X$ is a simplicial complex and $T$ a subcomplex of $X$. A discrete Morse function $f$ on $K$ gives rise to a discrete Morse function on the order complex $S_K$ of $K$, and the topology change determined by $f$ on $K$ can be understood by analyzing the topology change determined by the discrete Morse function on $S_K$.  This topology change is given by a structure theorem on the level subcomplexes of $S_K$. Finally, we show that the Borel-Moore homology of $K$, a homology theory for locally compact spaces, is isomorphic to the homology induced by a gradient vector field on $K$ and deduce corresponding weak Morse inequalities. The gradient vector field on $K$ provides a novel alternative to compute Borel-Moore homology.  

\end{abstract}

\maketitle

\section{Introduction}

Robin Forman's discrete Morse theory \cite{Forman1998, Forman2022} has a wide range of applications and uses including aiding in determining homotopy type \cite{Kozlov2022, Jonsson2008, Bayer2023}, motion planning and configuration spaces \cite{Upadhyay2023, Alpert2023, Mori2011}, and computation \cite{Bauer2021,henselman2017matroid, CurryGhristNanda2016}. It has been used to solve long-standing conjectures, such as the $K(\pi,1)$ conjecture for Artin groups \cite{PaoliniSalvetti2021}, and has been put to use to study hyperplane arrangements \cite{SalvettiSettpanella2007}. There are applications in commutative algebra \cite{JollenbeckWelker2009}, geometric group theory \cite{AdiprasitoBenedetti2020}, and group theory more generally \cite{Fernandez2023,NandaTamakiTanaka2018}, and algebraic topology \cite{FarleySabalka2005}. It is safe to say that discrete Morse theory has become an indispensable tool in many disciplines.

Forman developed discrete Morse theory for a regular CW complex $X$ where the reduction of $X$ consists of a sequence of removals of  free pairs or elementary collapses.  Since its inception, many authors have developed variations of discrete Morse theory in different settings \cite{Benedetti2012, JostZhang23, KnudsonWang2022,NandaTombari2026,Kukiela2013} or by removing cells with varying structure \cite{Zaremsky2022,Fernandez2023, BauerEdelsbrunner2017}.  The purpose of our work is to develop discrete Morse theory for a more general class of simplicial complexes that contain simplices that may be missing faces, so-called open simplicial complexes.    Formally, let $X$ be a simplicial complex, $T$ a (nonempty) subcomplex, and $K=X\setminus T$. Any such $K\subseteq X$ will be called an \textbf{open simplicial complex}. One might define a more general object along these lines, but this class is broad enough to capture many cases of interest. Open simplicial complexes naturally arise as part of discrete stratified Morse theory \cite{KnudsonWang2022}. Indeed, if one has a compact Whitney stratified space $Z$ with stratification $Z_0\subset Z_1\subset\cdots\subset Z_d=Z$, where the space $Z_i\setminus Z_{i-1}$ is a (possibly empty) $i$-manifold, then there is a triangulation $T$ of $Z$, with each $T_i$ triangulating $Z_i$ \cite{Johnson1983}. The various $T_i\setminus T_{i-1}$ are then open simplicial complexes. 

Here we briefly summarize the discrete Morse theory of open simplicial complexes as developed in section \ref{sec: structure theorem}.  Given an open simplicial complex $K$ with $K=X\setminus T$, one may place a discrete Morse function (and induced gradient vector field) on $K$ in the standard way. Naively attempting to analyze the topology of $K$ by studying the level subcomplexes results in bizarre phenomena.  Example \ref{example: main example} illustrates that (i) topology change can occur with the addition of regular simplices, (ii) the attachment of a $(p+2)$-simplex can destroy a homology class in dimension $p$, and (iii) the attachment of a $(p+2)$-simplex can create a homology class in dimension $p+1$, none of which are desirable. Figure \ref{fig:pathological} shows an example of a discrete gradient on an open simplicial complex with no critical cells; if ordinary discrete Morse theory is applied here then one would conclude that the space has the homotopy type of the empty set, which is clearly absurd. To make sense of this, we pass to the order complex $S_K$ of $K$, the poset of chains of simplices of $K$. This is an honest simplicial complex (even though $K$ is not), and in fact it is a subcomplex of the barycentric subdivision of $X$; its geometric realization $|S_K|$ is therefore a subspace of the realization of $K$. More is true: $|S_K|$ is a deformation retract of $|K|$.  By extending the discrete Morse function on $K$ to a discrete Morse function on all of $X$, we may then extend further to a discrete Morse function on the subdivision $\sd(X)$ and use this to define a function $F$ on $S_K$ that interpolates the values of the original $f:K\to\zr$. This allows us to define, for a real number $a$, a level subcomplex $S_K(a)$ of $S_K$ for this new function, and these lead to associated subspaces $K(a)$ of $K$ whose realizations retract to those of the $S_K(a)$.  Theorem \ref{thm:sublevel} shows that it is in this setting where the topology change is intelligible.  Formally, if the interval $[a,b]$ does not contain any critical values of $F\colon S_K\to\zr$, then $|K(b)|$ deformation retracts to $|K(a)|$. If $[a,b]$ contains a single critical value $c$ of $F$, with corresponding critical $i$-cell $\sigma^{i}\in S_K$, then $|K(b)|\simeq |K(a)|\cup \sigma^{i}$. 

We furthermore use discrete Morse theory on open simplicial complexes to relate the homology of the Morse complex associated to a discrete gradient on $K$ to the Borel-Moore homology  $H_\bullet^{BM}(K)$ of $K$, a homology theory for non-compact spaces.  While there are several equivalent definitions of Borel-Moore homology, in our setting we have $H_\bullet^{BM}(K) \cong H_\bullet(X,T),$ where $H_\bullet(X,T)$ is simplicial homology. If we denote the Morse complex of $K$ by $\zm_\bullet(K)$,  Theorem \ref{thm:khomology} gives an isomorphism $H_i(\zm_\bullet(K))\cong H_i^{BM}(K)$ between the (discrete Morse) homology of $K$ and its Borel-Moore homology. From there we obtain immediately weak discrete Morse inequalities relating the number of critical simplices in $K$ to the Betti numbers of the Borel-Moore homology of $K$. The gradient vector field furthermore enables computation of Borel-Moore homology by constructing a chain complex whose basis consists of critical cells, with boundary maps defined by counting gradient paths between critical cells in consecutive dimensions. Several examples illustrating the computational efficiency of this approach appear in Section \ref{sec:borelmoore}.

Benedetti \cite{Benedetti2012} developed a discrete Morse theory for manifolds with boundary, and we describe here how his results relate to ours. Suppose $M$ is a triangulated $d$-manifold with boundary $\partial M$, and let $f$ be a {\em boundary-critical} discrete Morse function on $M$ (i.e., all the cells in $\partial M$ are critical for $f$). Denote the number of critical $i$-cells for $f$ in $M\setminus\partial M$ by $c_i^{\textrm{int}}$. Benedetti proved the following:
\begin{enumerate}
    \item $M$ has the same homology as a cell complex with exactly $c_d^{\textrm{int}}$ $0$-cells, $c_{d-1}^{\textrm{int}}$ $1$-cells,..., $c_1^{\textrm{int}}$ $(d-1)$-cells, and $c_0^{\textrm{int}}$ $d$-cells.
    \item If, in addition, $M$ is PL, then $M$ is homotopy equivalent to a cell complex with $c_{d-k}$ $k$-cells, $k=0,\dots ,d$.
    \item These imply the relative weak Morse inequalities $$\textrm{rank}\ H_{d-k}(M)\le c_k^{\textrm{int}}(f)$$ for all $k$.
\end{enumerate}
To prove the first two statements, Benedetti builds an explicit complex inside the dual block complex of $M$ with the requisite number of cells in each dimension. Our methods are different and do not explicitly yield the first two statements, but Theorem \ref{thm:khomology} generalizes this as much as possible. We do recover the third statement via the observation that $M$ and $M\setminus\partial M$ have the same homotopy type and that Poincar\'e duality tells us that $$H_k^{\textrm{BM}}(M)\cong H^{d-k}(M).$$ Since $H_{d-k}(M)$ and $H^{d-k}(M)$ have the same rank, we conclude the result (Corollary \ref{cor:weak}). Moreover, our result holds in the more general setting of $K=X\setminus T$ for any simplicial pair $(X,T)$.

We note also that in \cite{JostZhang23}, the authors develop a discrete Morse theory for certain types of hypergraphs, which they call {\em complex-like}. This is a technical condition, but it includes the requirement that the link of a hyperedge have the homotopy type of a sphere. Our open simplicial complexes are hypergraphs, but they need not be complex-like. Indeed, our running example, shown in Figure \ref{fig:runningexample}, is not complex-like as the link of the simplex $\sigma$ has the homotopy type of three points, which is not a sphere. Moreover, our methods here differ from those in \cite{JostZhang23}, which relates discrete Morse theory to the continuous version via Lov\'asz extension.

Finally, some remarks on computational aspects are in order. We take a discrete gradient on an open simplicial complex as given. That is, we do not explicitly say how one might obtain such an object. Existing algorithms (e.g., \cite{BabsonHersh2005,JoswigPfetsch2006,KKM2005,Lewiner2003}) for constructing discrete gradients on simplicial complexes can be adapted easily to this setting. We therefore focus here on the topological consequences of the existence of discrete gradients on open complexes.

\section{Preliminaries}

\subsection{Discrete Morse theory}
We briefly summarize the relevant concepts from discrete Morse theory. The primary reference for this is Forman's original paper \cite{Forman1998}. Throughout this paper, $X$ will denote a finite simplicial complex. If $\alpha$ is a $k$-simplex, we often write $\alpha^{(k)}$ to indicate its dimension. If $\alpha$ is a codimension-$1$ face of $\beta$ we write $\alpha < \beta$. A \textbf{discrete vector field on $X$} is a collection $V_X$ of pairs of simplices $\{\alpha^{(k)} < \beta^{(k+1)}\}$ such that each simplex of $X$ is in at most one pair in $V_X$. Pairs in $V_X$ are called \textbf{regular} and simplices in $X$ that are not in any pair in $V_X$ are called \textbf{critical}. A $k$-$(k+1)$ \textbf{$V_X$-path} is a sequence of cells
$$\alpha_0^{(k)}<\beta^{(k+1)}>\alpha_1^{(k)}<\beta_1^{(k+1)}>\cdots <\beta_{r-1}^{(k+1)}>\alpha_r^{(k)},$$ where each pair $\alpha_i<\beta_i$ is in $V_X$ and $\alpha_i\ne \alpha_{i+1}$ for $i=0,\dots, r-1$. The path is \textbf{closed} if $\alpha_0=\alpha_r$, $r>0$. We call $V_X$ a \textbf{discrete gradient} if there are no non-trivial closed $V_X$-paths.

A \textbf{discrete Morse function on $X$} is a real-valued function on the simplices of $X$, which we denote by $f\colon X\to\zr$, satisfying the following two conditions for any simplex $\alpha^{(k)}$:
\begin{enumerate}
    \item $\#\{\beta^{(k+1)}>\alpha^{(k)}: f(\beta)\le f(\alpha)\} \le 1$;
    \item $\#\{\nu^{(k-1)} < \alpha^{(k)}: f(\nu)\ge f(\alpha)\}\le 1$.
\end{enumerate}
Forman proved \cite[Lemma 2.5]{Forman1998} that these conditions are exclusive; that is, if one of the sets above is nonempty, then the other must be empty. Given such a function $f$, we have an associated discrete gradient $V_f$ defined by including a pair $\{\alpha^{(k)}<\beta^{(k+1}\}$ precisely when $f(\alpha)\ge f(\beta)$. The simplices for which both sets above are empty are then left unpaired and therefore are critical. The conditions on the values of $f$ imply that there are no closed $V_f$-paths so that this discrete vector field is indeed a gradient. Note that a critical $0$-cell corresponds to a local minimum of $f$, and vice versa, and that a critical $n$-cell ($n=\dim X$) corresponds to a local maximum of $f$.

Conversely, given a discrete gradient $V_X$ on $X$, there are infinitely many discrete Morse functions having $V_X$ as their associated gradient. One can construct such a function explicitly, and even make it self-indexing (i.e., $f(\alpha)=\dim\alpha$ for every critical cell $\alpha$, \cite[Theorem 9.3]{Forman1998}). In practice, however, one uses a result from the theory of directed graphs. Denote the Hasse diagram of $X$ by $H_X$. This is a directed graph having a vertex for each simplex of $X$ and a directed edge $\beta^{(k+1)}>\alpha^{(k)}$ for each face-pair in $X$. Given $V_X$, reverse the edges in $H_X$ corresponding to each pair $\{\alpha^{(k)}<\beta^{(k+1)}\}$ in $V_X$. Since $V_X$ has no closed paths, the resulting directed graph is acyclic, and it is a standard theorem that such directed graphs support functions on their vertices that decrease along every directed path. Such a function is a discrete Morse function on $X$ with gradient $V_X$. Note that we can always choose such a function to be injective if we wish. Moreover, and this will be important later, if we choose a collection of $V_X$-paths and specify the values of a function on the associated vertices in the modified Hasse diagram in such a way that the function values decrease along the collection of paths, then we can extend this function to the entire modified Hasse diagram to a discrete Morse function on $X$.

\subsection{Open simplicial complexes}
Let $X$ be a simplicial complex. An \textbf{open simplicial complex} $K$ is the complement in $X$ of a (nonempty) subcomplex $T$; that is, $ K=X\setminus T$. The complex $K$ is noncompact. It is the union of open simplices in $X$; that is, $K$ may contain a $k$-simplex of $X$ but not all of that simplex's faces. 

Note that we may still define a discrete gradient vector field on an open simplicial complex $K$: it is simply a discrete vector field with no non-trivial closed paths. If one tries to analogously define a discrete Morse function on $K$, there is one technical problem: the defining conditions above may no longer be exclusive. The proof of that very much relies on the fact that one is in a simplicial complex. However, given a discrete gradient on $K$, the construction of an associated discrete Morse function may be carried out on the modified Hasse diagram of $K$. (Or, one could use the definition above with the imposition of a condition that if one of the sets is nonempty, then the other must be empty.) Moreover, we no longer necessarily have a correspondence between local minima of the function and vertices of $K$; that is, a local minimum of a discrete Morse function might occur on a simplex of dimension greater than $0$.

Figure \ref{fig:runningexample} shows an example of such a $K$ along with a discrete gradient on $K$. The subcomplex $T$ is shown by the dashed lines; it consists of the boundary curve along with the two squares in the interior. Observe that $K$ does not contain any of the vertices of $X$, as these all lie in $T$. The complex $K$ has the homotopy type of a wedge of two circles. The discrete gradient vector field on $K$ has three critical cells: the $2$-simplex $\sigma$ and the two $1$-simplices $e_1$ and $e_2$. 

\begin{figure}
    
$$
\begin{tikzpicture}[scale = 1.5,decoration={markings,mark=at position 0.6 with {\arrow{triangle 60}}},]

%Vertices
\phantom{\node[inner sep=0pt, circle, fill=black] (1) at (1,0) [draw] {};}
\phantom{\node[inner sep=0pt, circle, fill=black] (2) at (2,0) [draw] {};}
\phantom{\node[inner sep=0pt, circle, fill=black] (3) at (5,0) [draw] {};}
\phantom{\node[inner sep=0pt, circle, fill=black] (4) at (6,0) [draw] {};}
\phantom{\node[inner sep=0pt, circle, fill=black] (5) at (0,1) [draw] {};}
\phantom{\node[inner sep=0pt, circle, fill=black] (6) at (1,1) [draw] {};}
\phantom{\node[inner sep=0pt, circle, fill=black] (7) at (2,1) [draw] {};}
\phantom{\node[inner sep=0pt, circle, fill=black] (8) at (3,1) [draw] {};}
\phantom{\node[inner sep=0pt, circle, fill=black] (9) at (4,1) [draw] {};}
\phantom{\node[inner sep=0pt, circle, fill=black] (10) at (5,1) [draw] {};}
\phantom{\node[inner sep=0pt, circle, fill=black] (11) at (6,1) [draw] {};}
\phantom{\node[inner sep=0pt, circle, fill=black] (12) at (7,1) [draw] {};}
\phantom{\node[inner sep=0pt, circle, fill=black] (13) at (0,2) [draw] {};}
\phantom{\node[inner sep=0pt, circle, fill=black] (14) at (1,2) [draw] {};}
\phantom{\node[inner sep=0pt, circle, fill=black] (15) at (2,2) [draw] {};}
\phantom{\node[inner sep=0pt, circle, fill=black] (16) at (3,2) [draw] {};}
\phantom{\node[inner sep=0pt, circle, fill=black] (17) at (4,2) [draw] {};}
\phantom{\node[inner sep=0pt, circle, fill=black] (18) at (5,2) [draw] {};}
\phantom{\node[inner sep=0pt, circle, fill=black] (19) at (6,2) [draw] {};}
\phantom{\node[inner sep=0pt, circle, fill=black] (20) at (7,2) [draw] {};}
\phantom{\node[inner sep=0pt, circle, fill=black] (21) at (1,3) [draw] {};}
\phantom{\node[inner sep=0pt, circle, fill=black] (22) at (2,3) [draw] {};}
\phantom{\node[inner sep=0pt, circle, fill=black] (23) at (5,3) [draw] {};}
\phantom{\node[inner sep=0pt, circle, fill=black] (24) at (6,3) [draw] {};}

%Filled in triangles
\filldraw[fill=black!30, draw=none] (1,0)--(1,1)--(0,1)--cycle;
\filldraw[fill=black!30, draw=none] (1,0)--(1,1)--(2,1)--cycle;
\filldraw[fill=black!30, draw=none] (1,0)--(2,0)--(2,1)--cycle;
\filldraw[fill=black!30, draw=none] (2,0)--(2,1)--(3,1)--cycle;
\filldraw[fill=black!30, draw=none] (5,0)--(5,1)--(4,1)--cycle;
\filldraw[fill=black!30, draw=none] (5,0)--(5,1)--(6,1)--cycle;
\filldraw[fill=black!30, draw=none] (5,0)--(6,0)--(6,1)--cycle;
\filldraw[fill=black!30, draw=none] (6,0)--(6,1)--(7,1)--cycle;
\filldraw[fill=black!30, draw=none] (1,2)--(1,3)--(2,3)--cycle;
\filldraw[fill=black!30, draw=none] (1,2)--(2,2)--(2,3)--cycle;
\filldraw[fill=black!30, draw=none] (5,2)--(5,3)--(6,3)--cycle;
\filldraw[fill=black!30, draw=none] (5,2)--(6,2)--(6,3)--cycle;
\filldraw[fill=black!30, draw=none] (0,1)--(0,2)--(1,2)--cycle;
\filldraw[fill=black!30, draw=none] (0,1)--(1,1)--(1,2)--cycle;
\filldraw[fill=black!30, draw=none] (2,1)--(2,2)--(3,2)--cycle;
\filldraw[fill=black!30, draw=none] (2,1)--(3,1)--(3,2)--cycle;
\filldraw[fill=black!30, draw=none] (4,1)--(4,2)--(5,2)--cycle;
\filldraw[fill=black!30, draw=none] (4,1)--(5,1)--(5,2)--cycle;
\filldraw[fill=black!30, draw=none] (6,1)--(6,2)--(7,2)--cycle;
\filldraw[fill=black!30, draw=none] (6,1)--(7,1)--(7,2)--cycle;
\filldraw[fill=black!30, draw=none] (0,2)--(1,2)--(1,3)--cycle;
\filldraw[fill=black!30, draw=none] (2,2)--(2,3)--(3,2)--cycle;
\filldraw[fill=black!30, draw=none] (4,2)--(5,2)--(5,3)--cycle;
\filldraw[fill=black!30, draw=none] (6,2)--(6,3)--(7,2)--cycle;
\filldraw[fill=black!30, draw=none] (3,1)--(3,2)--(4,2)--cycle;
\filldraw[fill=black!30, draw=none] (3,1)--(4,1)--(4,2)--cycle;

%Edges
\draw[dashed]  (1)--(2) node[midway, left] {};
\draw[dashed]  (2)--(8) node[midway, left] {};
\draw[dashed]  (8)--(9) node[midway, left] {};
\draw[dashed]  (9)--(3) node[midway, left] {};
\draw[dashed]  (3)--(4) node[midway, left] {};
\draw[dashed]  (4)--(12) node[midway, left] {};
\draw[dashed]  (12)--(20) node[midway, left] {};
\draw[dashed]  (20)--(24) node[midway, left] {};
\draw[dashed]  (24)--(23) node[midway, left] {};
\draw[dashed]  (23)--(17) node[midway, left] {};
\draw[dashed]  (17)--(16) node[midway, left] {};
\draw[dashed]  (16)--(22) node[midway, left] {};
\draw[dashed]  (22)--(21) node[midway, left] {};
\draw[dashed]  (21)--(13) node[midway, left] {};
\draw[dashed]  (13)--(5) node[midway, left] {};
\draw[dashed]  (5)--(1) node[midway, left] {};

\draw[dashed]  (6)--(7) node[midway, left] {};
\draw[dashed]  (7)--(15) node[midway, left] {};
\draw[dashed]  (15)--(14) node[midway, left] {};
\draw[dashed]  (14)--(6) node[midway, left] {};

\draw[dashed]  (10)--(11) node[midway, left] {};
\draw[dashed]  (11)--(19) node[midway, left] {};
\draw[dashed]  (19)--(18) node[midway, left] {};
\draw[dashed]  (18)--(10) node[midway, left] {};

\draw[-]  (1)--(6) node[midway, left] {};
\draw[-]  (1)--(7) node[midway, left] {};
\draw[-]  (2)--(7) node[midway, left] {};
\draw[-]  (5)--(6) node[midway, left] {};
\draw[-]  (5)--(14) node[midway, left] {};
\draw[-]  (13)--(14) node[midway, left] {};
\draw[-]  (7)--(8) node[midway, below] {$e_1$};
\draw[-]  (14)--(21) node[midway, left] {};
\draw[-]  (14)--(22) node[midway, left] {};
\draw[-]  (22)--(15) node[midway, left] {};
\draw[-]  (15)--(16) node[midway, left] {};
\draw[-]  (7)--(16) node[midway, left] {};
\draw[-]  (16)--(8) node[midway, left] {};
\draw[-]  (8)--(17) node[midway, left] {};
\draw[-]  (9)--(17) node[midway, left] {};
\draw[-]  (9)--(18) node[midway, left] {};
\draw[-]  (9)--(10) node[midway, left] {};
\draw[-]  (17)--(18) node[midway, above] {$e_2$};
\draw[-]  (18)--(23) node[midway, left] {};
\draw[-]  (18)--(24) node[midway, left] {};
\draw[-]  (19)--(24) node[midway, left] {};
\draw[-]  (19)--(20) node[midway, left] {};
\draw[-]  (11)--(20) node[midway, left] {};
\draw[-]  (11)--(12) node[midway, left] {};
\draw[-]  (11)--(4) node[midway, left] {};
\draw[-]  (3)--(10) node[midway, left] {};
\draw[-]  (3)--(11) node[midway, left] {};

%Arrows
\draw[-triangle 60]  (2.5,1.5)--(2.2,1.8);
\draw[-triangle 60]  (2.5,2)--(2.5,2.3);
\draw[-triangle 60]  (2,2.5)--(1.7,2.5);
\draw[-triangle 60]  (1.5,2.5)--(1.25,2.75);
\draw[-triangle 60]  (1,2.5)--(.7,2.5);
\draw[-triangle 60]  (.5,2)--(.5,1.7);
\draw[-triangle 60]  (.5,1.5)--(.75,1.25);
\draw[-triangle 60]  (.5,1)--(.5,.7);
\draw[-triangle 60]  (1,.5)--(1.3,.5);
\draw[-triangle 60]  (1.5,.5)--(1.75,.25);
\draw[-triangle 60]  (2,.5)--(2.3,.5);

\draw[-triangle 60]  (6.5,1)--(6.5,1.3);
\draw[-triangle 60]  (6.5,1.5)--(6.2,1.8);
\draw[-triangle 60]  (6.5,2)--(6.5,2.3);
\draw[-triangle 60]  (6,2.5)--(5.7,2.5);
\draw[-triangle 60]  (5.5,2.5)--(5.25,2.75);
\draw[-triangle 60]  (5,2.5)--(4.7,2.5);
\draw[-triangle 60]  (4.5,1.5)--(4.75,1.25);
\draw[-triangle 60]  (4.5,1)--(4.5,.7);
\draw[-triangle 60]  (5,.5)--(5.3,.5);
\draw[-triangle 60]  (5.5,.5)--(5.75,.25);
\draw[-triangle 60]  (6,.5)--(6.3,.5);

\draw[-triangle 60]  (3,1.5)--(3.3,1.5);
\draw[-triangle 60]  (3.5,1.5)--(3.8,1.2);
\draw[-triangle 60]  (4,1.5)--(4.3,1.5);

%Vertex names
\node[anchor = west ]  at (2.6,1.4) {{$\sigma$}};
\node[anchor = west ]  at (4.2,1.7) {{$\tau$}};

\end{tikzpicture}
$$
    \caption{\label{fig:runningexample} An open simplicial complex.}
\end{figure}
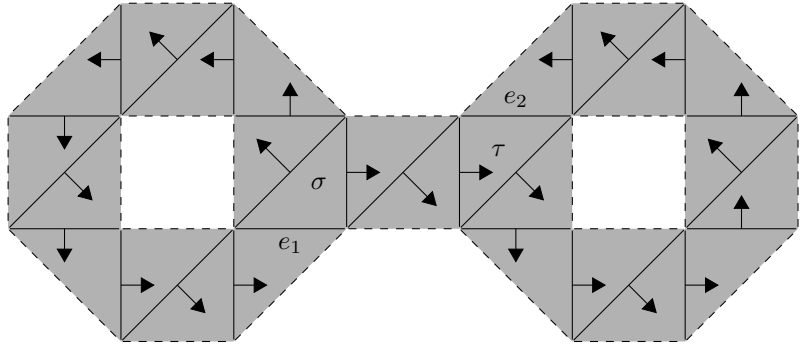

A \textbf{chain} in an open simplicial complex $K=X\setminus T$ is a collection $C$ of simplices
$$\sigma_0\subset\sigma_1\subset\cdots\subset \sigma_k$$
where each $\sigma_i\in K$. If $C$ is a maximal chain with $\sigma=\sigma_k$, we call the integer $k$ the \textbf{height} of $\sigma$. 

The \textbf{order complex} of $K$, denoted $S_K$, is the simplicial complex with vertex set equal to the set of simplices in $K$, and whose simplices are the chains in $K$: $S_K=\{C\subseteq K : C \text{ is a chain}\}$.  If $d$ denotes the maximal height of any simplex in $K$, then $S_K$ has dimension $d$. The complex $S_K$ is contained in $S_X$, the order complex of $X$.  The complex $S_X$ is simply the barycentric subdivision $\sd(X)$, and so we may realize $S_K$ as a subcomplex of $\sd(X)$ whose geometric realization $|S_K|$ lives inside of $|K|$. More generally, recall that a \textbf{hypergraph} $L=(V,E)$ is a set $V$ of \textbf{vertices} and  a set $E$ of \textbf{hyperedges}.  The geometric realization $|L|$ is the union of open simplices, one for each hyperedge of $L$, and the corresponding order complex $S_L$ is a subcomplex of the barycentric subdivision of the smallest simplicial complex $Y$ containing $L$. We state the following result from \cite{JostZhang23}, the idea of which can be traced back to Hudson's book \cite{Hudson1969} and which is also a generalization of a result of Lickorish \cite[Lemma 1]{Lickorish91}.

\begin{proposition}[\cite{JostZhang23}, Prop.~5.6]\label{prop:strongdef} Let $L$ be any hypergraph. Then the space $|S_L|$ is a strong deformation retract of $|L|$.
\end{proposition}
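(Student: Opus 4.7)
The plan is to build an explicit strong deformation retract $r\colon |L|\to |S_L|$ using the barycentric subdivision of $Y$, the smallest simplicial complex containing $L$. Inside $|Y|$, we have the usual partition $|Y|=\bigsqcup_{\sigma\in Y}|\sigma|^\circ$, and the subdivision $\sd(Y)$ refines it: every $p\in|Y|$ lies in the relative interior of a unique simplex of $\sd(Y)$, which corresponds to a chain $\sigma_1\subsetneq\cdots\subsetneq\sigma_j$ of simplices of $Y$. The key observation is that $p\in|L|$ is equivalent to $\sigma_j\in L$, while $p\in|S_L|$ is equivalent to $\sigma_i\in L$ for every $i$.

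Writing $p=\sum_{i=1}^j s_i\,b_{\sigma_i}$ with $s_i>0$ and $\sum_i s_i=1$ in these subdivision coordinates, let $I_p=\{i:\sigma_i\in L\}$ and define
$$r(p)=\frac{1}{\sum_{i\in I_p}s_i}\sum_{i\in I_p}s_i\,b_{\sigma_i}.$$
Because $\sigma_j\in L$ for any $p\in|L|$, we have $j\in I_p$, so the denominator is at least $s_j>0$; and $r(p)$ is a convex combination of barycenters indexed by a chain drawn entirely from $L$, so $r(p)\in|S_L|$. If $p$ already lies in $|S_L|$ then $I_p=\{1,\ldots,j\}$ and $r(p)=p$, so $r|_{|S_L|}=\mathrm{id}$.

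The heart of the argument is the continuity of $r$ on $|L|$. The only places where $r$ could fail to be continuous are the faces shared by two open simplices of $\sd(Y)$; approaching such a face inside $|L|$ amounts to sending some coordinate $s_i$ to zero. If $i\notin I_p$, the formula is unchanged in the limit. If $i\in I_p$ and $i\ne j$, the numerator and denominator each lose the same term while the $b_{\sigma_j}$ contribution remains positive. If $i=j$, then the chain is losing its top simplex; for the limit to remain in $|L|$ the new top of the reduced chain must itself lie in $L$, supplying a positive summand and making the value match the one computed on the smaller chain. In each case the formulas from adjacent open simplices agree on their common face, so $r$ is continuous on $|L|$.

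Finally, set $H(p,t)=(1-t)p+t\,r(p)$, interpreted inside the closed simplex of $\sd(Y)$ supporting $p$. Both $p$ and $r(p)$ have strictly positive coordinate at $b_{\sigma_j}$ with $\sigma_j\in L$, so every point of the segment has the same, and therefore lies in $|\sigma_j|^\circ\subseteq|L|$. This produces a continuous $H\colon|L|\times[0,1]\to|L|$ with $H(\cdot,0)=\mathrm{id}$, $H(\cdot,1)=r$, and $H(q,t)=q$ for $q\in|S_L|$, giving the required strong deformation retraction. I expect the only real obstacle to be the continuity verification in the previous paragraph; the linear interpolation and retract identities are formal once $r$ is in hand.
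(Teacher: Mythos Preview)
The paper does not give its own proof of this proposition; it is quoted from \cite{JostZhang23} and used as a black box. So there is nothing in the paper to compare your argument against.

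That said, your argument is correct and is essentially the standard one. The map $r$ you write down can be described globally: if $s_\sigma\colon |Y|\to [0,1]$ denotes the barycentric coordinate of $\sd(Y)$ associated to the vertex $b_\sigma$, then
\[
r(p)=\frac{\sum_{\sigma\in L} s_\sigma(p)\,b_\sigma}{\sum_{\sigma\in L} s_\sigma(p)},
\]
and since each $s_\sigma$ is continuous on $|Y|$ and the denominator is strictly positive on $|L|$ (because the maximal $\sigma$ in the supporting chain lies in $L$), continuity is immediate. This is cleaner than your face-by-face analysis, which is morally right but, as written, only treats the case where a single coordinate $s_i$ tends to zero; several can vanish simultaneously when you approach a lower-dimensional face, and phrasing the argument via the global formula above avoids that bookkeeping. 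The homotopy $H$ and the verification that it stays in $|L|$ are fine as you have them.
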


Our open simplicial complexes are a special kind of hypergraph. In particular, Proposition \ref{prop:strongdef} implies that if $d$ is the maximal height of a simplex in an open simplicial complex $K$, then $K$ has the homotopy type of a simplicial complex of dimension at most $d$.

Figure \ref{fig:ordercomplex} shows the order complex of $K$ from Figure \ref{fig:runningexample}. It is the one-dimensional subspace shown in the interior. The deformation retraction from $|K|$ to $|S_K|$ is evident. 

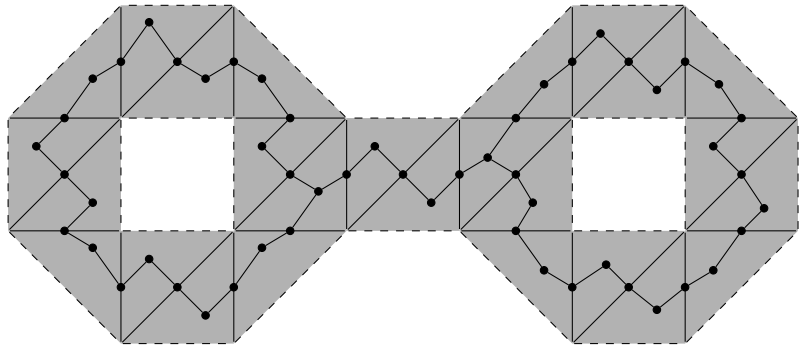
\begin{figure}
    
$$
\begin{tikzpicture}[scale = 1.5,decoration={markings,mark=at position 0.6 with {\arrow{triangle 60}}},]

%Vertices
\phantom{\node[inner sep=0pt, circle, fill=black] (1) at (1,0) [draw] {};}
\phantom{\node[inner sep=0pt, circle, fill=black] (2) at (2,0) [draw] {};}
\phantom{\node[inner sep=0pt, circle, fill=black] (3) at (5,0) [draw] {};}
\phantom{\node[inner sep=0pt, circle, fill=black] (4) at (6,0) [draw] {};}
\phantom{\node[inner sep=0pt, circle, fill=black] (5) at (0,1) [draw] {};}
\phantom{\node[inner sep=0pt, circle, fill=black] (6) at (1,1) [draw] {};}
\phantom{\node[inner sep=0pt, circle, fill=black] (7) at (2,1) [draw] {};}
\phantom{\node[inner sep=0pt, circle, fill=black] (8) at (3,1) [draw] {};}
\phantom{\node[inner sep=0pt, circle, fill=black] (9) at (4,1) [draw] {};}
\phantom{\node[inner sep=0pt, circle, fill=black] (10) at (5,1) [draw] {};}
\phantom{\node[inner sep=0pt, circle, fill=black] (11) at (6,1) [draw] {};}
\phantom{\node[inner sep=0pt, circle, fill=black] (12) at (7,1) [draw] {};}
\phantom{\node[inner sep=0pt, circle, fill=black] (13) at (0,2) [draw] {};}
\phantom{\node[inner sep=0pt, circle, fill=black] (14) at (1,2) [draw] {};}
\phantom{\node[inner sep=0pt, circle, fill=black] (15) at (2,2) [draw] {};}
\phantom{\node[inner sep=0pt, circle, fill=black] (16) at (3,2) [draw] {};}
\phantom{\node[inner sep=0pt, circle, fill=black] (17) at (4,2) [draw] {};}
\phantom{\node[inner sep=0pt, circle, fill=black] (18) at (5,2) [draw] {};}
\phantom{\node[inner sep=0pt, circle, fill=black] (19) at (6,2) [draw] {};}
\phantom{\node[inner sep=0pt, circle, fill=black] (20) at (7,2) [draw] {};}
\phantom{\node[inner sep=0pt, circle, fill=black] (21) at (1,3) [draw] {};}
\phantom{\node[inner sep=0pt, circle, fill=black] (22) at (2,3) [draw] {};}
\phantom{\node[inner sep=0pt, circle, fill=black] (23) at (5,3) [draw] {};}
\phantom{\node[inner sep=0pt, circle, fill=black] (24) at (6,3) [draw] {};}

%Filled in triangles
\filldraw[fill=black!30, draw=none] (1,0)--(1,1)--(0,1)--cycle;
\filldraw[fill=black!30, draw=none] (1,0)--(1,1)--(2,1)--cycle;
\filldraw[fill=black!30, draw=none] (1,0)--(2,0)--(2,1)--cycle;
\filldraw[fill=black!30, draw=none] (2,0)--(2,1)--(3,1)--cycle;
\filldraw[fill=black!30, draw=none] (5,0)--(5,1)--(4,1)--cycle;
\filldraw[fill=black!30, draw=none] (5,0)--(5,1)--(6,1)--cycle;
\filldraw[fill=black!30, draw=none] (5,0)--(6,0)--(6,1)--cycle;
\filldraw[fill=black!30, draw=none] (6,0)--(6,1)--(7,1)--cycle;
\filldraw[fill=black!30, draw=none] (1,2)--(1,3)--(2,3)--cycle;
\filldraw[fill=black!30, draw=none] (1,2)--(2,2)--(2,3)--cycle;
\filldraw[fill=black!30, draw=none] (5,2)--(5,3)--(6,3)--cycle;
\filldraw[fill=black!30, draw=none] (5,2)--(6,2)--(6,3)--cycle;
\filldraw[fill=black!30, draw=none] (0,1)--(0,2)--(1,2)--cycle;
\filldraw[fill=black!30, draw=none] (0,1)--(1,1)--(1,2)--cycle;
\filldraw[fill=black!30, draw=none] (2,1)--(2,2)--(3,2)--cycle;
\filldraw[fill=black!30, draw=none] (2,1)--(3,1)--(3,2)--cycle;
\filldraw[fill=black!30, draw=none] (4,1)--(4,2)--(5,2)--cycle;
\filldraw[fill=black!30, draw=none] (4,1)--(5,1)--(5,2)--cycle;
\filldraw[fill=black!30, draw=none] (6,1)--(6,2)--(7,2)--cycle;
\filldraw[fill=black!30, draw=none] (6,1)--(7,1)--(7,2)--cycle;
\filldraw[fill=black!30, draw=none] (0,2)--(1,2)--(1,3)--cycle;
\filldraw[fill=black!30, draw=none] (2,2)--(2,3)--(3,2)--cycle;
\filldraw[fill=black!30, draw=none] (4,2)--(5,2)--(5,3)--cycle;
\filldraw[fill=black!30, draw=none] (6,2)--(6,3)--(7,2)--cycle;
\filldraw[fill=black!30, draw=none] (3,1)--(3,2)--(4,2)--cycle;
\filldraw[fill=black!30, draw=none] (3,1)--(4,1)--(4,2)--cycle;

%Edges
\draw[dashed]  (1)--(2) node[midway, left] {};
\draw[dashed]  (2)--(8) node[midway, left] {};
\draw[dashed]  (8)--(9) node[midway, left] {};
\draw[dashed]  (9)--(3) node[midway, left] {};
\draw[dashed]  (3)--(4) node[midway, left] {};
\draw[dashed]  (4)--(12) node[midway, left] {};
\draw[dashed]  (12)--(20) node[midway, left] {};
\draw[dashed]  (20)--(24) node[midway, left] {};
\draw[dashed]  (24)--(23) node[midway, left] {};
\draw[dashed]  (23)--(17) node[midway, left] {};
\draw[dashed]  (17)--(16) node[midway, left] {};
\draw[dashed]  (16)--(22) node[midway, left] {};
\draw[dashed]  (22)--(21) node[midway, left] {};
\draw[dashed]  (21)--(13) node[midway, left] {};
\draw[dashed]  (13)--(5) node[midway, left] {};
\draw[dashed]  (5)--(1) node[midway, left] {};

\draw[dashed]  (6)--(7) node[midway, left] {};
\draw[dashed]  (7)--(15) node[midway, left] {};
\draw[dashed]  (15)--(14) node[midway, left] {};
\draw[dashed]  (14)--(6) node[midway, left] {};

\draw[dashed]  (10)--(11) node[midway, left] {};
\draw[dashed]  (11)--(19) node[midway, left] {};
\draw[dashed]  (19)--(18) node[midway, left] {};
\draw[dashed]  (18)--(10) node[midway, left] {};

\draw[-]  (1)--(6) node[midway, left] {};
\draw[-]  (1)--(7) node[midway, left] {};
\draw[-]  (2)--(7) node[midway, left] {};
\draw[-]  (5)--(6) node[midway, left] {};
\draw[-]  (5)--(14) node[midway, left] {};
\draw[-]  (13)--(14) node[midway, left] {};
\draw[-]  (7)--(8) node[midway, below] {};
\draw[-]  (14)--(21) node[midway, left] {};
\draw[-]  (14)--(22) node[midway, left] {};
\draw[-]  (22)--(15) node[midway, left] {};
\draw[-]  (15)--(16) node[midway, left] {};
\draw[-]  (7)--(16) node[midway, left] {};
\draw[-]  (16)--(8) node[midway, left] {};
\draw[-]  (8)--(17) node[midway, left] {};
\draw[-]  (9)--(17) node[midway, left] {};
\draw[-]  (9)--(18) node[midway, left] {};
\draw[-]  (9)--(10) node[midway, left] {};
\draw[-]  (17)--(18) node[midway, above] {};
\draw[-]  (18)--(23) node[midway, left] {};
\draw[-]  (18)--(24) node[midway, left] {};
\draw[-]  (19)--(24) node[midway, left] {};
\draw[-]  (19)--(20) node[midway, left] {};
\draw[-]  (11)--(20) node[midway, left] {};
\draw[-]  (11)--(12) node[midway, left] {};
\draw[-]  (11)--(4) node[midway, left] {};
\draw[-]  (3)--(10) node[midway, left] {};
\draw[-]  (3)--(11) node[midway, left] {};

%Order complex verticies
\node[inner sep=1pt, circle, fill=black] (O0) at (4.5,2) [draw] {};
\node[inner sep=1pt, circle, fill=black] (O1) at (4.75,2.3) [draw] {};
\node[inner sep=1pt, circle, fill=black] (O2) at (5,2.5) [draw] {};
\node[inner sep=1pt, circle, fill=black] (O3) at (5.25,2.75) [draw] {};
\node[inner sep=1pt, circle, fill=black] (O4) at (5.5,2.5) [draw] {};
\node[inner sep=1pt, circle, fill=black] (O5) at (5.75,2.25) [draw] {};
\node[inner sep=1pt, circle, fill=black] (O6) at (6,2.5) [draw] {};
\node[inner sep=1pt, circle, fill=black] (O7) at (6.3,2.3) [draw] {};
\node[inner sep=1pt, circle, fill=black] (O8) at (6.5,2) [draw] {};
\node[inner sep=1pt, circle, fill=black] (O9) at (6.25,1.75) [draw] {};
\node[inner sep=1pt, circle, fill=black] (O10) at (6.5,1.5) [draw] {};
\node[inner sep=1pt, circle, fill=black] (O11) at (6.7,1.2) [draw] {};
\node[inner sep=1pt, circle, fill=black] (O12) at (6.5,1) [draw] {};
\node[inner sep=1pt, circle, fill=black] (O13) at (6.25,.65) [draw] {};
\node[inner sep=1pt, circle, fill=black] (O14) at (6,.5) [draw] {};
\node[inner sep=1pt, circle, fill=black] (O15) at (5.75,.3) [draw] {};
\node[inner sep=1pt, circle, fill=black] (O16) at (5.5,.5) [draw] {};
\node[inner sep=1pt, circle, fill=black] (O17) at (5.3,.7) [draw] {};
\node[inner sep=1pt, circle, fill=black] (O18) at (5,.5) [draw] {};
\node[inner sep=1pt, circle, fill=black] (O19) at (4.75,.65) [draw] {};
\node[inner sep=1pt, circle, fill=black] (O20) at (4.5,1) [draw] {};
\node[inner sep=1pt, circle, fill=black] (O21) at (4.65,1.25) [draw] {};
\node[inner sep=1pt, circle, fill=black] (O22) at (4.5,1.5) [draw] {};
\node[inner sep=1pt, circle, fill=black] (O23) at (4.25,1.65) [draw] {};
\node[inner sep=1pt, circle, fill=black] (O24) at (4,1.5) [draw] {};
\node[inner sep=1pt, circle, fill=black] (O25) at (3.75,1.25) [draw] {};
\node[inner sep=1pt, circle, fill=black] (O26) at (3.5,1.5) [draw] {};
\node[inner sep=1pt, circle, fill=black] (O27) at (3.25,1.75) [draw] {};
\node[inner sep=1pt, circle, fill=black] (O28) at (3,1.5) [draw] {};

\node[inner sep=1pt, circle, fill=black] (O29) at (2.5,1) [draw] {};
\node[inner sep=1pt, circle, fill=black] (O30) at (2.25,.85) [draw] {};
\node[inner sep=1pt, circle, fill=black] (O31) at (2,.5) [draw] {};
\node[inner sep=1pt, circle, fill=black] (O32) at (1.75,.25) [draw] {};
\node[inner sep=1pt, circle, fill=black] (O33) at (1.5,.5) [draw] {};
\node[inner sep=1pt, circle, fill=black] (O34) at (1.25,.75) [draw] {};
\node[inner sep=1pt, circle, fill=black] (O35) at (1,.5) [draw] {};
\node[inner sep=1pt, circle, fill=black] (O36) at (.75,.85) [draw] {};
\node[inner sep=1pt, circle, fill=black] (O37) at (.5,1) [draw] {};
\node[inner sep=1pt, circle, fill=black] (O38) at (.75,1.25) [draw] {};
\node[inner sep=1pt, circle, fill=black] (O39) at (.5,1.5) [draw] {};
\node[inner sep=1pt, circle, fill=black] (O40) at (.25,1.75) [draw] {};
\node[inner sep=1pt, circle, fill=black] (O41) at (.5,2) [draw] {};
\node[inner sep=1pt, circle, fill=black] (O42) at (.75,2.35) [draw] {};
\node[inner sep=1pt, circle, fill=black] (O43) at (1,2.5) [draw] {};
\node[inner sep=1pt, circle, fill=black] (O44) at (1.25,2.85) [draw] {};
\node[inner sep=1pt, circle, fill=black] (O45) at (1.5,2.5) [draw] {};
\node[inner sep=1pt, circle, fill=black] (O46) at (1.75,2.35) [draw] {};
\node[inner sep=1pt, circle, fill=black] (O47) at (2,2.5) [draw] {};
\node[inner sep=1pt, circle, fill=black] (O48) at (2.25,2.35) [draw] {};
\node[inner sep=1pt, circle, fill=black] (O49) at (2.5,2) [draw] {};
\node[inner sep=1pt, circle, fill=black] (O50) at (2.25,1.75) [draw] {};
\node[inner sep=1pt, circle, fill=black] (O51) at (2.5,1.5) [draw] {};
\node[inner sep=1pt, circle, fill=black] (O52) at (2.75,1.35) [draw] {};

%Order complex edges
\draw[]  (O0)--(O1) node[midway, left] {};
\draw[]  (O1)--(O2) node[midway, left] {};
\draw[]  (O2)--(O3) node[midway, left] {};
\draw[]  (O3)--(O4) node[midway, left] {};
\draw[]  (O4)--(O5) node[midway, left] {};
\draw[]  (O5)--(O6) node[midway, left] {};
\draw[]  (O6)--(O7) node[midway, left] {};
\draw[]  (O7)--(O8) node[midway, left] {};
\draw[]  (O8)--(O9) node[midway, left] {};
\draw[]  (O9)--(O10) node[midway, left] {};
\draw[]  (O10)--(O11) node[midway, left] {};
\draw[]  (O11)--(O12) node[midway, left] {};
\draw[]  (O12)--(O13) node[midway, left] {};
\draw[]  (O13)--(O14) node[midway, left] {};
\draw[]  (O14)--(O15) node[midway, left] {};
\draw[]  (O15)--(O16) node[midway, left] {};
\draw[]  (O16)--(O17) node[midway, left] {};
\draw[]  (O17)--(O18) node[midway, left] {};
\draw[]  (O18)--(O19) node[midway, left] {};
\draw[]  (O19)--(O20) node[midway, left] {};
\draw[]  (O20)--(O21) node[midway, left] {};
\draw[]  (O21)--(O22) node[midway, left] {};
\draw[]  (O22)--(O23) node[midway, left] {};
\draw[]  (O23)--(O24) node[midway, left] {};
\draw[]  (O24)--(O25) node[midway, left] {};
\draw[]  (O25)--(O26) node[midway, left] {};
\draw[]  (O26)--(O27) node[midway, left] {};
\draw[]  (O27)--(O28) node[midway, left] {};
\draw[]  (O23)--(O0) node[midway, left] {};

\draw[]  (O29)--(O30) node[midway, left] {};
\draw[]  (O30)--(O31) node[midway, left] {};
\draw[]  (O31)--(O32) node[midway, left] {};
\draw[]  (O32)--(O33) node[midway, left] {};
\draw[]  (O33)--(O34) node[midway, left] {};
\draw[]  (O34)--(O35) node[midway, left] {};
\draw[]  (O35)--(O36) node[midway, left] {};
\draw[]  (O36)--(O37) node[midway, left] {};
\draw[]  (O37)--(O38) node[midway, left] {};
\draw[]  (O38)--(O39) node[midway, left] {};
\draw[]  (O39)--(O40) node[midway, left] {};
\draw[]  (O40)--(O41) node[midway, left] {};
\draw[]  (O41)--(O42) node[midway, left] {};
\draw[]  (O42)--(O43) node[midway, left] {};
\draw[]  (O43)--(O44) node[midway, left] {};
\draw[]  (O44)--(O45) node[midway, left] {};
\draw[]  (O45)--(O46) node[midway, left] {};
\draw[]  (O46)--(O47) node[midway, left] {};
\draw[]  (O47)--(O48) node[midway, left] {};
\draw[]  (O48)--(O49) node[midway, left] {};
\draw[]  (O49)--(O50) node[midway, left] {};
\draw[]  (O50)--(O51) node[midway, left] {};
\draw[]  (O51)--(O52) node[midway, left] {};
\draw[]  (O29)--(O52) node[midway, left] {};
\draw[]  (O28)--(O52) node[midway, left] {};

\end{tikzpicture}
$$
    \caption{\label{fig:ordercomplex} The order complex of the complex from Figure \ref{fig:runningexample}.}
\end{figure}

\section{Borel--Moore homology}\label{sec:borelmoore}
Borel--Moore homology was originally developed in \cite{BorelMoore1960} as a homology theory for locally compact (in particular, non-compact) spaces. The standard reference is Chapter V of \cite{Bredon1996}. Given a space $Y$, one may consider the chain complex of {\em infinite} singular chains $\sum_{i=0}^\infty a_i\sigma_i$ that are {\em locally finite} in the sense that for any compact set $C\subset Y$ there are only finitely many $i$ for which $C\cap\ \textrm{supp}(\sigma_i)$ is nonempty. The ordinary boundary operator is well-defined here since taking boundaries does not change this finiteness condition. The corresponding homology groups $H_\bullet^{BM}(Y)$ are called the Borel--Moore homology of the space $Y$. If $Y$ is compact, then these groups are isomorphic to the singular homology groups of $Y$.

In our setting, there is a more concrete description of these groups. If $K=X\setminus T$ is an open simplicial complex, then there is an isomorphism
$$H_\bullet^{BM}(K) \cong H_\bullet(X,T),$$ where $H_\bullet(X,T)$ is the ordinary relative singular (or simplicial, in our case) homology. The same is true if $X$ is a CW-complex and $T$ is a closed subcomplex. Moreover, if $M$ is a smooth, oriented $d$-manifold, not necessarily compact, then there is a canonical isomorphism
$$H_i^{BM}(M)\cong H^{d-i}(M).$$ In particular, if $K=M\setminus \partial M$, then, via Hom-duality in ordinary singular homology, we see that $$\textrm{rank}\,H_i^{BM}(K) = \textrm{rank}\,H^{d-i}(K) = \beta_{d-i}(K).$$ One advantage of Borel--Moore homology is that if $M$ is non-compact, there is still a fundamental class $[M]\in H_n^{BM}(M)$, even though the singular homology $H_n(M)$ vanishes.

Let $V_K$ be a gradient vector field on $K$, and extend this to a gradient vector field $V_X$ on $X$ by decreeing that all simplices of $T$ are unpaired; that is, the restriction $V_T$ of $V_X$ to $T$ has all cells critical. Denote the number of critical cells of dimension $i$ in $V_K$ by $c_i^\textrm{int}$. The {\em Morse complex of $X$} is a chain complex $(\zm_\bullet(X),\partial)$ defined as follows. The basis of $\zm_i(X)$ is the set of critical $i$-cells in $V_X$. Given a $V_X$-path $\gamma(\tau,\sigma)$ from a critical $i$-cell $\tau$ to a critical $(i-1)$-cell $\sigma$, one assigns an index $i(\gamma)\in \{\pm 1\}$ according to a formula involving the fixed orientations of the cells of $X$. The details are not especially important here. One then defines $\partial(\tau)$ to be 
$$\partial(\tau) = \sum_{\gamma(\tau,\sigma)} i(\gamma)\sigma.$$ Note that if we use $\zz_2$-coefficients for homology, this simply counts the number of $V_X$-paths from $\tau$ to $\sigma$ mod $2$. By Theorem 8.2 of \cite{Forman1998}, the homology groups of $\zm_\bullet(X)$ are isomorphic to the singular homology groups of $X$. This construction makes sense over the complex $K$, but it is not clear what the homology groups of the resulting complex $\zm_\bullet(K)$ are computing. Indeed, it may be the case that there are no critical vertices in $K$, so that the $0$th homology of this complex vanishes, but $K$ will certainly consist of at least one component. Thus, we should not expect that the homology groups of $\zm_\bullet(K)$ agree with the singular homology groups of $K$.

Since $V_K$ is contained in $V_X$, and the gradient paths in $V_K$ stay inside of $K$, we have a canonical inclusion
$$\zm_\bullet(K)\to \zm_\bullet(X).$$ Moreover, since the restriction of $V_X$ to $T$ leaves every cell in $T$ critical, the boundary map in $\zm_\bullet(T)$ is simply the usual simplicial boundary map in the simplicial chain complex $C_\bullet(T)$.

\begin{theorem}\label{thm:khomology} For all $i\ge 0$, there is an isomorphism
$$H_i(\zm_\bullet(K))\cong H_i^{BM}(K).$$
\end{theorem}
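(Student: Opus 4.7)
The plan is to realize $\zm_\bullet(K)$ as the quotient chain complex $\zm_\bullet(X)/C_\bullet(T)$, and then compare this with the standard short exact sequence of simplicial chains for the pair $(X,T)$ via Forman's quasi-isomorphism.

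First I would verify that there is a short exact sequence of chain complexes
$$0\longrightarrow C_\bullet(T)\longrightarrow \zm_\bullet(X)\longrightarrow \zm_\bullet(K)\longrightarrow 0.$$
As a graded group, the basis of $\zm_\bullet(X)$ splits as the critical cells of $V_K$ (all lying in $K$) together with the cells of $T$ (all critical for $V_X$), so this decomposition is clear at the level of bases. The crucial observation is that interior cells of any $V_X$-path lie in a regular pair of $V_X$, and no cell of $T$ does, so a $V_X$-path can meet $T$ only at its endpoints. For a critical $\sigma\in T$ of dimension $k+1$, every codimension-$1$ face of $\sigma$ lies in $T$ and is therefore critical, which forces every $V_X$-path from $\sigma$ to have length zero; hence the Morse boundary of $\sigma$ in $\zm_\bullet(X)$ agrees with the simplicial boundary of $\sigma$ in $C_\bullet(T)$, and $C_\bullet(T)$ is a chain subcomplex of $\zm_\bullet(X)$. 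Dually, for a critical $\tau\in K$, those $V_X$-paths from $\tau$ that stay in $K$ are precisely the $V_K$-paths appearing in the boundary of $\tau$ in $\zm_\bullet(K)$, while $V_X$-paths ending at a cell of $T$ get killed modulo $C_\bullet(T)$. So the quotient complex is exactly $\zm_\bullet(K)$.

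Next I would compare with the tautological short exact sequence
$$0\longrightarrow C_\bullet(T)\longrightarrow C_\bullet(X)\longrightarrow C_\bullet(X,T)\longrightarrow 0$$
via Forman's quasi-isomorphism $\Phi\colon C_\bullet(X)\to \zm_\bullet(X)$. The key compatibility is that $\Phi|_{C_\bullet(T)}$ is the identity: $\Phi$ is constructed by iteratively cancelling regular $V_X$-pairs, and since every cell of $T$ is critical no such cancellation ever occurs on a chain supported in $T$. Consequently $\Phi$ induces a commuting ladder of short exact sequences whose leftmost vertical map is the identity and whose middle map is a quasi-isomorphism by Forman's theorem. Applying the five lemma to the associated long exact sequences in homology shows that the induced map $C_\bullet(X,T)\to \zm_\bullet(K)$ is a quasi-isomorphism. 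Combining this with the identification $H_\bullet(X,T)\cong H_\bullet^{BM}(K)$ recalled at the start of the section yields the claim.

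The main obstacle is the compatibility $\Phi|_{C_\bullet(T)}=\mathrm{id}$. Intuitively this is clear from interpreting $\Phi$ as a discrete gradient flow (which fixes critical cells), but a complete argument requires either unpacking an explicit formula for $\Phi$ -- for example as a sum over inverse $V_X$-paths from a simplex to critical cells, which degenerates to a single term when the simplex is itself critical -- or invoking a relative formulation of Forman's chain homotopy equivalence applied to the pair $(X,T)$. All other steps are essentially bookkeeping given the observation that $V_X$-paths cannot pass through $T$.
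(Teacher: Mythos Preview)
Your proof is correct and follows essentially the same route as the paper: identify $\zm_\bullet(T)=C_\bullet(T)$ (since every cell of $T$ is critical), set up the short exact sequence $0\to C_\bullet(T)\to\zm_\bullet(X)\to\zm_\bullet(K)\to 0$, and compare it with the simplicial sequence for $(X,T)$ via Forman's result and the five lemma/long exact sequence. The only difference is that you are more explicit than the paper about the naturality step---you work with Forman's chain-level map $\Phi$ and verify $\Phi|_{C_\bullet(T)}=\mathrm{id}$, whereas the paper simply invokes the homology isomorphism $H_\bullet(\zm_\bullet(X))\cong H_\bullet(X)$ and asserts that the long exact sequence forces $H_\bullet(\zm_\bullet(X,T))\cong H_\bullet(X,T)$; your version makes the implicit compatibility check visible.
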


\begin{proof}
Consider the short exact sequence of chain complexes
$$0\to \zm_\bullet(T)\to\zm_\bullet(X)\to \zm_\bullet(X,T)\to 0,$$ where $\zm_\bullet(X,T)$ is simply the quotient complex $\zm_\bullet(X)/\zm_\bullet(T)$. Since the homology of $\zm_\bullet(X)$ is isomorphic to the singular homology groups of $X$, the long exact homology sequence implies that the homology of $\zm_\bullet(X,T)$ is simply the ordinary relative homology $H_\bullet(X,T)$. In turn, these groups are the Borel--Moore homology groups of $K$. 

Note that the basis of $\zm_i(X,T)$ consists of those critical $i$-cells in $V_X$ that do not lie in $V_T$; this is precisely the basis of $\zm_i(K)$. Indeed, the subcomplex $\zm_\bullet(K)$ maps isomorphically to $\zm_\bullet(X,T)$ under the quotient map. Putting this all together we obtain isomorphisms $$H_i(\zm_\bullet(K)) \cong H_i(\zm_\bullet(X,T)) \cong H_i(X,T)\cong H_i^{BM}(K).$$
\end{proof}

\begin{example}
    As an almost trivial application of Theorem \ref{thm:khomology}, consider the open simplicial complex $K$ consisting of a single open $n$-cell. This is realized as the interior of the standard $n$-simplex. The only gradient on $K$ is the empty gradient. The Morse complex then consists of a copy of $\zz$ in dimension $n$ and $0$'s elsewhere. It follows that $$H_i^{\textrm{BM}}(K)=\begin{cases}
        \zz & i=n \\
        0 & i\ne n.
    \end{cases}$$
\end{example}

\begin{example}
    Consider the complex $K$ of Figure \ref{fig:runningexample} with the discrete gradient $V_K$ shown. There is a single critical $2$-cell and two critical $1$-cells. The complex $\zm_\bullet(K)$ is then
    $$0\to \zz\to \zz^2\to 0 \to 0.$$ The boundary map $\zz\to \zz^2$ is the $0$ map. This is clear modulo $2$ since there are two gradient paths from the critical $2$-simplex $\sigma$ to each of the critical edges $e_1$ and $e_2$, and one can check this integrally as well. It follows that $H_2^{BM}(K)\cong \zz$ and $H_1^{BM}(K)\cong \zz^2$.
\end{example}

\begin{example}
    Note that if $K=X\setminus T$, where $X$ is a regular CW-complex and $T$ is a subcomplex, then the Borel-Moore homology of $K$ is isomorphic to the relative homology $H_\bullet(X,T)$. Discrete Morse theory works just as well on regular CW-complexes and Theorem \ref{thm:khomology} applies to the Morse complex of a discrete gradient on such a $K$. 

    Consider the space $X=[-1,1]^n$ and let $T$ be the origin in $\zr^n$. A regular cell decomposition of $X$ may be defined by considering the obvious cubical decomposition of $\partial X$ and taking the cone of each cube with $T$. Then $K=X\setminus T$ is an open cell complex decomposing the punctured ball. Given a cell $\sigma$ in $\partial X$, define a discrete vector field $V$ on $K$ by $V=\{\{\sigma,\sigma\ast T\}: \sigma\in \partial X\}$. This is obviously acyclic and is in fact a perfect matching. The complex $\zm_\bullet(K)$ is the $0$-complex and thus $H^\textrm{BM}_\bullet(K)=0$.
\end{example}

\begin{example}
    Let $X$ be the solid torus shown in Figure \ref{fig:solidtorus}. We have illustrated a regular cell decomposition of $X$. Let $T$ be the boundary torus and set $K=X\setminus T$. The discrete gradient $V$ on $K$ is shown; it consists of three pairs beginning with the rightmost $2$-face of the blue $3$-cell paired with the adjacent $3$-cell and then pairing successive faces in a clockwise manner. There is one critical $2$-cell $\sigma$ and one critical $3$-cell $\tau$. The Morse complex $\zm_\bullet(K)$ is then
    $$0\to\zz\to\zz\to 0 \to 0\to 0.$$ There are two gradient paths from $\tau$ to $\sigma$: the direct one $\sigma<\tau$ and the other path going around the torus. The indices of these paths are opposite to each other so that the boundary map is $0$. Thus we deduce 
    $$H^{\textrm{BM}}_i(K) = \begin{cases}
        \zz & i=2,3 \\
        0 & \textrm{otherwise}.
    \end{cases} 
   $$ The interested reader is invited to triangulate this example by dividing each rectangular prism into two triangular prisms, subdividing these into tetrahedra via the standard prism operation, and then extending this discrete gradient to the triangulation in the obvious way. There are more critical cells, including one in dimension $1$, but the calculation of the homology yields the same result.
\end{example}

\begin{figure}
$$
\begin{tikzpicture}
\filldraw[red!30] (1.5,2.598)--(2.2,1.7)--(2.2,4.7)--(1.5,5.598)--(1.5,2.598);

\filldraw[blue!30] (1.5,5.598)--(5.5,5.598)--(5.5,2.598)--(3.7,1.7)--(2.2,1.7)--(2.2,4.7)--cycle;

\filldraw[black!10] (3.7,1.7)--(5.5,2.598)--(5.5,5.598)--(3.7,4.7)--cycle;

\filldraw[black!10] (3.2,0.75)--(4,0)--(4,3)--(3.2,3.75)--cycle;

\filldraw[black!10] (0,0)--(1.7,0.75)--(1.7,3.75)--(0,3)--cycle;

    \node[inner sep=2pt,circle,fill=black] (0) at (0,0) {};
    \node[inner sep=2pt,circle,fill=black] (1) at (4,0) {};
    \node[inner sep=2pt,circle,fill=black] (2) at (1.5,2.598) {};
    \node[inner sep=2pt,circle,fill=black] (3) at (5.5,2.598) {};
    \node[inner sep=2pt,circle,fill=black] (4) at (1.7,0.75) {};
    \node[inner sep=2pt,circle,fill=black] (5) at (3.2,0.75) {};
    \node[inner sep=2pt,circle,fill=black] (6) at (2.2,1.7){};
    \node[inner sep=2pt,circle,fill=black] (7) at (3.7,1.7) {};
    \node[inner sep=2pt,circle,fill=black] (10) at (0,3) {};
    \node[inner sep=2pt,circle,fill=black] (11) at (4,3) {};
    \node[inner sep=2pt,circle,fill=black] (12) at (1.5,5.598) {};
    \node[inner sep=2pt,circle,fill=black] (13) at (5.5,5.598) {};
    \node[inner sep=2pt,circle,fill=black] (14) at (1.7,3.75) {};
    \node[inner sep=2pt,circle,fill=black] (15) at (3.2,3.75) {};
    \node[inner sep=2pt,circle,fill=black] (16) at (2.2,4.7){};
    \node[inner sep=2pt,circle,fill=black] (17) at (3.7,4.7) {};

    \draw[thick] (0)--(10);
    \draw[thick] (0)--(1);
    \draw[thick] (1)--(3);
    \draw[thick,dashed] (0)--(2);
    \draw[thick,dashed] (2)--(3);
    \draw[thick,dashed] (0)--(4);
    \draw[thick,dashed] (4)--(5);
    \draw[thick,dashed] (4)--(6);
    \draw[thick,dashed] (6)--(7);
    \draw[thick,dashed] (5)--(7);
    \draw[thick,dashed] (1)--(5);
    \draw[thick,dashed] (3)--(7);
    \draw[thick,dashed] (2)--(6);

    \draw[thick] (10)--(11);
    \draw[thick] (11)--(13);
    \draw[thick] (10)--(12);
    \draw[thick] (12)--(13);
    \draw[thick] (10)--(14);
    \draw[thick] (14)--(15);
    \draw[thick] (14)--(16);
    \draw[thick] (16)--(17);
    \draw[thick] (15)--(17);
    \draw[thick] (11)--(15);
    \draw[thick] (13)--(17);
    \draw[thick] (12)--(16);

    \draw[thick,dashed] (2)--(12);
    \draw[thick] (3)--(13);
    \draw[thick,dashed] (4)--(14);
    \draw[thick,dashed] (5)--(15);
    \draw[thick,dashed] (6)--(16);
    \draw[thick,dashed] (7)--(17);
    \draw[thick] (1)--(11);

    \node at (1.8,4.7) {$\sigma$};
    \node at (3.15,5.15) {$\tau$};

    \draw[ultra thick,->,red] (4.75,3.15)--(4.25,2.25);
    \draw[ultra thick,->,red] (3.7,1.4)--(2.55,1.4);
    \draw[ultra thick,->,red,dashed] (1,1.4)--(1.5,2.3);
    
\end{tikzpicture}$$
    \caption{\label{fig:solidtorus}A discrete gradient on the interior of a solid torus. The red $2$-cell $\sigma$ and the blue $3$-cell $\tau$ are critical.}
\end{figure}

\begin{remark}
    These examples are simple, but they illustrate that discrete Morse theory is useful for computing the Borel-Moore homology of finitely triangulable spaces. Indeed, traditional methods rely on sheaf theory, complexes of locally-finite chains, or passing to the one-point compactification, each of which has its advantages and drawbacks. We have demonstrated that one may compute the homology by simply finding a discrete gradient on the open complex, which may be easier than computing the homology directly from the definition. 
\end{remark}

The following corollary, in the manifold case, was proved by Benedetti \cite[Cor. 3.4]{Benedetti2012}.

\begin{corollary}[Weak Morse inequalities]\label{cor:weak}
Let $K$ be an open simplicial complex with a discrete gradient $V_K$ and let $c_i$ be the number of critical $i$-cells in $V_K$. Then $$\textrm{rank}\,H_i^{BM}(K)\le c_i.$$ In particular, if $K=M\setminus \partial M$, where $M$ is a $d$-manifold with boundary, then $$\beta_{d-i}(M)\le c_i.$$
\end{corollary}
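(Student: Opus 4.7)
The plan is to derive both inequalities as essentially formal consequences of Theorem \ref{thm:khomology}, augmented by a standard rank estimate for the first part and by Poincar\'e duality plus a collar-neighborhood argument for the manifold statement.

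First I would invoke Theorem \ref{thm:khomology} to replace $H_i^{BM}(K)$ by $H_i(\zm_\bullet(K))$. By construction, $\zm_i(K)$ is the free abelian group on the critical $i$-cells of $V_K$, so it has rank exactly $c_i$. The homology $H_i(\zm_\bullet(K)) = Z_i/B_i$ is a subquotient of $\zm_i(K)$: the cycles $Z_i$ form a subgroup of a free abelian group of rank $c_i$, hence are themselves free of rank at most $c_i$, and $H_i(\zm_\bullet(K))$ is a quotient of $Z_i$. Therefore $\textrm{rank}\,H_i^{BM}(K) = \textrm{rank}\,H_i(\zm_\bullet(K)) \le c_i$, which is the first inequality.

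For the manifold case, I would assemble the three identifications already flagged in the introduction. Assume $M$ is oriented (otherwise pass to $\zz/2$-coefficients, which does not affect the rank bound). Since $K = M\setminus\partial M$ is an open, oriented $d$-manifold without boundary, Poincar\'e duality gives $H_i^{BM}(K)\cong H^{d-i}(K)$. The inclusion $K\hookrightarrow M$ is a homotopy equivalence via the standard deformation retraction along a collar of $\partial M$, so $H^{d-i}(K)\cong H^{d-i}(M)$. Finally, for the finite complex $M$ the universal coefficient theorem gives $\textrm{rank}\,H^{d-i}(M) = \textrm{rank}\,H_{d-i}(M) = \beta_{d-i}(M)$. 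Stringing these together with the first inequality produces $\beta_{d-i}(M) = \textrm{rank}\,H_i^{BM}(K) \le c_i$.

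There is no serious obstacle here: the corollary is essentially a rank count off Theorem \ref{thm:khomology}. The only point that genuinely requires input beyond the Morse-theoretic setup is the collar-neighborhood step giving $M\simeq M\setminus\partial M$, but for triangulated (hence PL) manifolds with boundary this is classical, so the reduction goes through without incident.
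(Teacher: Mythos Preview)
Your proposal is correct and follows essentially the same approach as the paper: a rank bound on the Morse chain complex via Theorem \ref{thm:khomology} for the first inequality, and Poincar\'e duality together with the homotopy equivalence $K\simeq M$ for the manifold case. You supply more detail (the subquotient argument, the collar neighborhood, universal coefficients), but the strategy is identical.
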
 

\begin{proof}
    The first assertion is trivial since the ranks of the homology groups of any chain complex are bounded above by the number of basis elements. In the second case, we note that $K$ and $M$ are homotopy equivalent spaces. By duality, we have $H_i^{BM}(K)\cong H^{d-i}(K)$ and the rank of the latter group is $\beta_{d-i}(K) = \beta_{d-i}(M)$. 
\end{proof}

\section{Structure theorem}\label{sec: structure theorem}

The primary application of discrete Morse theory is that a discrete gradient determines the homotopy type of a simplicial complex; that is, if $X$ is a simplicial complex with a discrete gradient $V_X$, then $X$ has the homotopy type of a cell complex with one $i$-cell for each critical $i$-simplex of $V_X$. In fact, one can obtain an even stronger result. Suppose $f$ is a discrete Morse function on $X$. For a real number $a$ define the level subcomplex $X_a$ by $$X_a = \bigcup_{\substack{\sigma\in X\\ f(\sigma)\le a}}\bigcup_{\tau\le\sigma}\tau.$$ Then if the interval $[a,b]$ contains no critical values of $f$, the complex $X_b$ simplicially collapses onto $X_a$. If there is a single critical cell $\sigma$ with $f(\sigma)\in [a,b]$, then $X_b$ is homotopy equivalent to $X_a\cup_{\partial\sigma}\sigma$. 

Note that there is no hope of obtaining the corresponding result for an open simplicial complex $K$ since the Morse complex $\zm_\bullet(K)$ computes the Borel--Moore homology of $K$ and this does not agree with the ordinary singular homology. Even worse, there could be no critical $0$-simplices in $K$ and therefore no way to even begin building a cell complex of the same homotopy type directly from a discrete gradient on $K$. 

However, we will consider the order complex of $K$ by embedding its geometric realization inside of $K$. This relationship turns out to yield the result we seek.

\begin{example}\label{example: main example}
Consider the open simplicial complex $K$ from Figure \ref{fig:runningexample}. In Figure \ref{fig:runningfunction} we have indicated a discrete Morse function corresponding to the discrete gradient. There are infinitely many such functions, but we have fixed one for illustration. Let us try to build the level ``subcomplexes" and study how the topology changes. Recall that on a simplicial complex, the topology changes only when we reach a critical value.

\begin{figure}
$$
\begin{tikzpicture}[scale = 1.7,decoration={markings,mark=at position 0.6 with {\arrow{triangle 60}}},]

%Vertices
\phantom{\node[inner sep=0pt, circle, fill=black] (1) at (1,0) [draw] {};}
\phantom{\node[inner sep=0pt, circle, fill=black] (2) at (2,0) [draw] {};}
\phantom{\node[inner sep=0pt, circle, fill=black] (3) at (5,0) [draw] {};}
\phantom{\node[inner sep=0pt, circle, fill=black] (4) at (6,0) [draw] {};}
\phantom{\node[inner sep=0pt, circle, fill=black] (5) at (0,1) [draw] {};}
\phantom{\node[inner sep=0pt, circle, fill=black] (6) at (1,1) [draw] {};}
\phantom{\node[inner sep=0pt, circle, fill=black] (7) at (2,1) [draw] {};}
\phantom{\node[inner sep=0pt, circle, fill=black] (8) at (3,1) [draw] {};}
\phantom{\node[inner sep=0pt, circle, fill=black] (9) at (4,1) [draw] {};}
\phantom{\node[inner sep=0pt, circle, fill=black] (10) at (5,1) [draw] {};}
\phantom{\node[inner sep=0pt, circle, fill=black] (11) at (6,1) [draw] {};}
\phantom{\node[inner sep=0pt, circle, fill=black] (12) at (7,1) [draw] {};}
\phantom{\node[inner sep=0pt, circle, fill=black] (13) at (0,2) [draw] {};}
\phantom{\node[inner sep=0pt, circle, fill=black] (14) at (1,2) [draw] {};}
\phantom{\node[inner sep=0pt, circle, fill=black] (15) at (2,2) [draw] {};}
\phantom{\node[inner sep=0pt, circle, fill=black] (16) at (3,2) [draw] {};}
\phantom{\node[inner sep=0pt, circle, fill=black] (17) at (4,2) [draw] {};}
\phantom{\node[inner sep=0pt, circle, fill=black] (18) at (5,2) [draw] {};}
\phantom{\node[inner sep=0pt, circle, fill=black] (19) at (6,2) [draw] {};}
\phantom{\node[inner sep=0pt, circle, fill=black] (20) at (7,2) [draw] {};}
\phantom{\node[inner sep=0pt, circle, fill=black] (21) at (1,3) [draw] {};}
\phantom{\node[inner sep=0pt, circle, fill=black] (22) at (2,3) [draw] {};}
\phantom{\node[inner sep=0pt, circle, fill=black] (23) at (5,3) [draw] {};}
\phantom{\node[inner sep=0pt, circle, fill=black] (24) at (6,3) [draw] {};}

%Filled in triangles
\filldraw[fill=black!30, draw=none] (1,0)--(1,1)--(0,1)--cycle;
\filldraw[fill=black!30, draw=none] (1,0)--(1,1)--(2,1)--cycle;
\filldraw[fill=black!30, draw=none] (1,0)--(2,0)--(2,1)--cycle;
\filldraw[fill=black!30, draw=none] (2,0)--(2,1)--(3,1)--cycle;
\filldraw[fill=black!30, draw=none] (5,0)--(5,1)--(4,1)--cycle;
\filldraw[fill=black!30, draw=none] (5,0)--(5,1)--(6,1)--cycle;
\filldraw[fill=black!30, draw=none] (5,0)--(6,0)--(6,1)--cycle;
\filldraw[fill=black!30, draw=none] (6,0)--(6,1)--(7,1)--cycle;
\filldraw[fill=black!30, draw=none] (1,2)--(1,3)--(2,3)--cycle;
\filldraw[fill=black!30, draw=none] (1,2)--(2,2)--(2,3)--cycle;
\filldraw[fill=black!30, draw=none] (5,2)--(5,3)--(6,3)--cycle;
\filldraw[fill=black!30, draw=none] (5,2)--(6,2)--(6,3)--cycle;
\filldraw[fill=black!30, draw=none] (0,1)--(0,2)--(1,2)--cycle;
\filldraw[fill=black!30, draw=none] (0,1)--(1,1)--(1,2)--cycle;
\filldraw[fill=black!30, draw=none] (2,1)--(2,2)--(3,2)--cycle;
\filldraw[fill=black!30, draw=none] (2,1)--(3,1)--(3,2)--cycle;
\filldraw[fill=black!30, draw=none] (4,1)--(4,2)--(5,2)--cycle;
\filldraw[fill=black!30, draw=none] (4,1)--(5,1)--(5,2)--cycle;
\filldraw[fill=black!30, draw=none] (6,1)--(6,2)--(7,2)--cycle;
\filldraw[fill=black!30, draw=none] (6,1)--(7,1)--(7,2)--cycle;
\filldraw[fill=black!30, draw=none] (0,2)--(1,2)--(1,3)--cycle;
\filldraw[fill=black!30, draw=none] (2,2)--(2,3)--(3,2)--cycle;
\filldraw[fill=black!30, draw=none] (4,2)--(5,2)--(5,3)--cycle;
\filldraw[fill=black!30, draw=none] (6,2)--(6,3)--(7,2)--cycle;
\filldraw[fill=black!30, draw=none] (3,1)--(3,2)--(4,2)--cycle;
\filldraw[fill=black!30, draw=none] (3,1)--(4,1)--(4,2)--cycle;

%Edges
\draw[dashed]  (1)--(2) node[midway, left] {};
\draw[dashed]  (2)--(8) node[midway, left] {};
\draw[dashed]  (8)--(9) node[midway, left] {};
\draw[dashed]  (9)--(3) node[midway, left] {};
\draw[dashed]  (3)--(4) node[midway, left] {};
\draw[dashed]  (4)--(12) node[midway, left] {};
\draw[dashed]  (12)--(20) node[midway, left] {};
\draw[dashed]  (20)--(24) node[midway, left] {};
\draw[dashed]  (24)--(23) node[midway, left] {};
\draw[dashed]  (23)--(17) node[midway, left] {};
\draw[dashed]  (17)--(16) node[midway, left] {};
\draw[dashed]  (16)--(22) node[midway, left] {};
\draw[dashed]  (22)--(21) node[midway, left] {};
\draw[dashed]  (21)--(13) node[midway, left] {};
\draw[dashed]  (13)--(5) node[midway, left] {};
\draw[dashed]  (5)--(1) node[midway, left] {};

\draw[dashed]  (6)--(7) node[midway, left] {};
\draw[dashed]  (7)--(15) node[midway, left] {};
\draw[dashed]  (15)--(14) node[midway, left] {};
\draw[dashed]  (14)--(6) node[midway, left] {};

\draw[dashed]  (10)--(11) node[midway, left] {};
\draw[dashed]  (11)--(19) node[midway, left] {};
\draw[dashed]  (19)--(18) node[midway, left] {};
\draw[dashed]  (18)--(10) node[midway, left] {};

\draw[-]  (1)--(6) node[midway, left] {};
\draw[-]  (1)--(7) node[midway, left] {};
\draw[-]  (2)--(7) node[midway, left] {};
\draw[-]  (5)--(6) node[midway, left] {};
\draw[-]  (5)--(14) node[midway, left] {};
\draw[-]  (13)--(14) node[midway, left] {};
\draw[-]  (7)--(8) node[midway, below] {};
\draw[-]  (14)--(21) node[midway, left] {};
\draw[-]  (14)--(22) node[midway, left] {};
\draw[-]  (22)--(15) node[midway, left] {};
\draw[-]  (15)--(16) node[midway, left] {};
\draw[-]  (7)--(16) node[midway, left] {};
\draw[-]  (16)--(8) node[midway, left] {};
\draw[-]  (8)--(17) node[midway, left] {};
\draw[-]  (9)--(17) node[midway, left] {};
\draw[-]  (9)--(18) node[midway, left] {};
\draw[-]  (9)--(10) node[midway, left] {};
\draw[-]  (17)--(18) node[midway, above] {};
\draw[-]  (18)--(23) node[midway, left] {};
\draw[-]  (18)--(24) node[midway, left] {};
\draw[-]  (19)--(24) node[midway, left] {};
\draw[-]  (19)--(20) node[midway, left] {};
\draw[-]  (11)--(20) node[midway, left] {};
\draw[-]  (11)--(12) node[midway, left] {};
\draw[-]  (11)--(4) node[midway, left] {};
\draw[-]  (3)--(10) node[midway, left] {};
\draw[-]  (3)--(11) node[midway, left] {};

%Simplex labels

\node[]  at (4.5,2) {\small{$0$}};
\node[]  at  (4.75,2.3) {\small{$1$}};
\node[]  at  (5,2.5) {\small{$2$}};
\node[]  at  (5.25,2.75) {\small{$3$}};
\node[]  at  (5.5,2.5) {\small{$4$}};
\node[]  at   (5.75,2.25) {\small{$5$}};
\node[]  at   (6,2.5) {\small{$6$}};
\node[]  at   (6.3,2.3) {\small{$7$}};
\node[]  at   (6.5,2) {\small{$8$}};
\node[]  at   (6.25,1.75) {\small{$9$}};
\node[]  at   (6.5,1.5) {\small{$10$}};
\node[]  at   (6.7,1.2) {\small{$11$}};
\node[]  at   (6.5,1) {\small{$12$}};
\node[]  at   (6.25,.65) {\small{$13$}};
\node[]  at   (6,.5) {\small{$14$}};
\node[]  at  (5.75,.3) {\small{$15$}};
\node[]  at   (5.5,.5) {\small{$16$}};
\node[]  at   (5.3,.7) {\small{$17$}};
\node[]  at   (5,.5) {\small{$18$}};
\node[]  at   (4.75,.65) {\small{$19$}};
\node[]  at  (4.5,1) {\small{$20$}};
\node[]  at  (4.65,1.25) {\small{$21$}};
\node[]  at   (4.5,1.5) {\small{$22$}};
\node[]  at  (4.25,1.65) {\small{$23$}};
\node[]  at   (4,1.5) {\small{$24$}};
\node[]  at   (3.75,1.25) {\small{$25$}};
\node[]  at   (3.5,1.5) {\small{$26$}};
\node[]  at  (3.25,1.75) {\small{$27$}};
\node[]  at   (3,1.5) {\small{$28$}};

\node[]  at   (2.5,1) {\small{$29$}};
\node[]  at  (2.25,.75) {\small{$30$}};
\node[]  at  (2,.5) {\small{$31$}};
\node[]  at   (1.75,.25) {\small{$32$}};
\node[]  at   (1.5,.5) {\small{$33$}};
\node[]  at   (1.25,.75) {\small{$34$}};
\node[]  at   (1,.5) {\small{$35$}};
\node[]  at   (.75,.75) {\small{$36$}};
\node[]  at   (.5,1) {\small{$37$}};
\node[]  at   (.75,1.25) {\small{$38$}};
\node[]  at   (.5,1.5) {\small{$39$}};
\node[]  at   (.25,1.75) {\small{$40$}};
\node[]  at   (.5,2) {\small{$41$}};
\node[]  at  (.75,2.35) {\small{$42$}};
\node[]  at   (1,2.5) {\small{$43$}};
\node[]  at   (1.25,2.85) {\small{$44$}};
\node[]  at  (1.5,2.5) {\small{$45$}};
\node[]  at   (1.75,2.35) {\small{$46$}};
\node[]  at   (2,2.5) {\small{$47$}};
\node[]  at   (2.25,2.35) {\small{$48$}};
\node[]  at   (2.5,2) {\small{$49$}};
\node[]  at   (2.25,1.75) {\small{$50$}};
\node[]  at   (2.5,1.5) {\small{$51$}};
\node[]  at   (2.75,1.35) {\small{$52$}};

\end{tikzpicture}
$$
    \caption{\label{fig:runningfunction} A discrete Morse function on the complex from Figure \ref{fig:runningexample}.}
\end{figure}
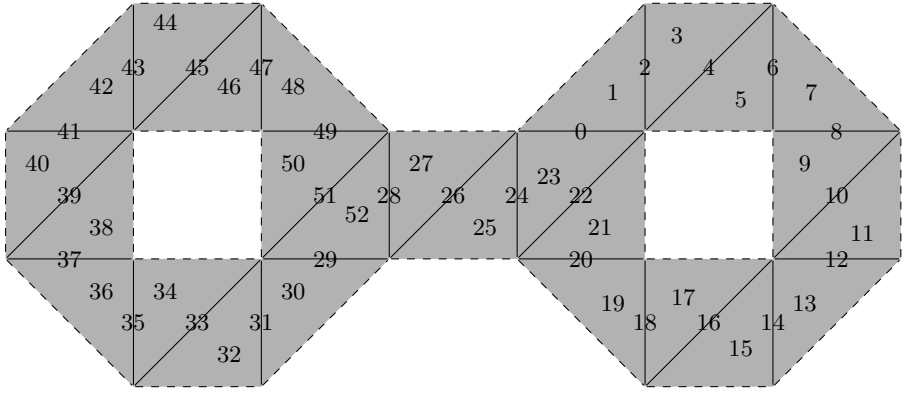

The absolute minimum occurs at the critical edge $e_2=f^{-1}(0)$. We have begun a component and we then attach cells in increasing order of function values. For a while, the homotopy type does not change. However, when we reach the cell we labeled with $\tau$ in Figure \ref{fig:runningexample}, with function value $23$, we create a $1$-cycle. This simplex is {\em regular}, and yet the topology has changed. Continuing on, nothing else changes through $f^{-1}(28)$, the edge adjacent to the critical $2$-cell $\sigma$. We then begin a new component with the critical edge $e_1=f^{-1}(29)$. We again attach cells in increasing order of function values until we finally attach $\sigma$. At that point, two things happen: the attachment of $\sigma$ destroys a $0$-dimensional homology class since the two components have merged, and it also creates a $1$-cycle.

Thus, we have seen many anomalous things. In an open simplicial complex a critical cell can create a homology class of a different dimension, or it can simultaneously create and destroy classes in multiple dimensions. Furthermore, the addition of a regular simplex can change the homotopy type. Note, however, that both $e_1$ and $e_2$ have height $0$, while $\sigma$ has height $1$.
\end{example}

%It is worth noting that this is not complex-like in the sense of Jost and Zhang.  This condition is violated by both $\sigma$ and $\tau$ since their boundary is not homotopic to a sphere.

The key to understanding this phenomenon is in the order complex. Recall that the order complex of $K$, $S_K$ is a subcomplex of the barycentric subdivision $\sd(X)$. We now describe how to use the discrete gradient $V_K$ on $K$ to build a discrete gradient on $S_K$ that captures these homotopy changes.

We have $K=X\setminus T$ and assume we are given a discrete gradient $V_K$ on $K$. Extend this to a discrete gradient $V_X$ on all of $X$ by making all the cells in $T$ critical, as in Section \ref{sec:borelmoore}. Choose an injective discrete Morse function $f$ on $X$ compatible with $V_X$. This choice can be arbitrary, but we fix it once and for all.

Consider the barycentric subdivision $\sd(X)$. In \cite{Zhukova2017}, A.~Zhukova describes a procedure to induce a discrete gradient on $\sd(X)$ from a given gradient on $X$ in such a way that there is a one-to-one correspondence between the critical cells of the two gradients. In fact, each critical cell in the subdivision will lie in the interior of a critical cell (of the same dimension) of the original gradient. The procedure is done on the individual simplices of $\sd(X)$ so that no closed paths are created in the interior of a simplex of $X$, and since the original vector field had no closed paths, neither does the new one. We include a description of Zhukova's algorithm in Appendix \ref{apdx}. An alternate approach to building such an object is described in \cite[Prop. 6.24]{Knudson2015}.

We will make a slight modification to the procedure in \cite{Zhukova2017} to keep better track of the critical simplices in $K$. Suppose $\sigma$ is a critical $k$-cell in $V_K$ and suppose the height of $\sigma$ is $i$.  We can choose any of the $k$-cells in $\sd(\sigma)$ to be critical, but in \cite[Sec. 3.2]{Zhukova2017} the author chooses a particular one based on a fixed ordering of the vertices of $\sigma$. If $i$ happens to equal $k$, then the entire simplex $\sd(\sigma)$ lies in $S_K$, so we can make an arbitrary choice of critical $k$-cell. The vertex ordering can be changed arbitrarily, though,  so for $i<k$ we take an ordering in which the $k$-cell we choose to be critical in $\sd(\sigma)$ has an $i$-dimensional face $\tau$ lying in $S_K$ which is paired by the procedure with an $(i+1)$-dimensional simplex in $\sd(\sigma)$. Any such simplex $\tau$ may be written as $\tau=b\ast\tau'$, where $b$ is the barycenter of $\sigma$ and $\tau'$ is a cell in the barycentric subdivision of a face $\sigma'<\sigma$ of height $i-1$. If there is more than one such $\tau$ we choose the one for which $f(\sigma)-f(\sigma')$ is the smallest (these numbers are positive; see \cite[Lemma 3.2]{JostZhang23}). This corresponds to the ``shallowest" path away from $\sigma$. 

Now consider a regular pair $\alpha^{(k)}<\beta^{(k+1)}$.  Write $\alpha=v_0v_1\cdots v_k$ and $\beta=v_0v_1\cdots v_kv_{k+1}$. If $\beta$ has no codimension-$1$ face that is the tail of a matched pair in $K$ (aside from $\alpha$, of course), perform the construction of \cite[Sec. 3.1]{Zhukova2017} to subdivide $\beta$ and pair the simplices in $\sd(\beta)$ together. The result is acyclic. 

Otherwise, there is at least one other $k$-simplex in $K$ paired with another $(k+1)$-cell in $K$. Choose the $k$-simplex $\alpha'$ in the boundary of $\beta$ of steepest descent relative to the function $f$; that is, the difference $f(\beta)-f(\alpha')$ is the greatest among all such differences. Assume we have ordered the vertices so that $\alpha' = v_1v_2\cdots v_{k+1}$ is paired with $\beta'=v_1v_2\cdots v_{k+1}u$ for some vertex $u$. Perform the construction of \cite{Zhukova2017}, and then make the following modification. Zhukova's construction pairs the barycenter $b$ with the edge $bv_{k+1}$, and it pairs the edge $bz$, where $z$ is the barycenter of $\alpha'$ with the $2$-simplex $bzv_{k+1}$. Define a new vector field $V'_\beta$ by removing the pairs $b<bv_{k+1}$ and $bz<bzv_{k+1}$ and adding the pairs $b<bz$ and $bv_{k+1}<bzv_{k+1}$. 

\begin{lemma}
    The vector field $V'_\beta$ has no closed paths.
\end{lemma}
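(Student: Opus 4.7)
My plan is to argue by contradiction: suppose $\gamma$ is a nontrivial closed $V'_\beta$-path, and produce from it a closed $V_\beta$-path, contradicting the acyclicity of Zhukova's original construction on $\sd(\beta)$.

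The first observation is that $V'_\beta$ and $V_\beta$ agree on $\sd(\beta)$ except on the four-cell ``diamond''
$$\{b,\ bz,\ bv_{k+1},\ bzv_{k+1}\}.$$
Correspondingly, the two modified Hasse diagrams differ only in the orientations of the four covering relations inside this diamond. Hence any closed V-path of $V'_\beta$ that does not touch the diamond is also a closed V-path of $V_\beta$, which is impossible. So $\gamma$ must traverse the diamond. Since V-paths alternate between two fixed consecutive dimensions, I split into two cases: (i) $\gamma$ is a $(0,1)$-path that uses the new pair $(b,bz)$, or (ii) $\gamma$ is a $(1,2)$-path that uses the new pair $(bv_{k+1},bzv_{k+1})$.

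In case (i), each passage of $\gamma$ through the diamond has the local form $\cdots > b < bz > z < \cdots$, since $bz$ is the only vertex-edge match in the diamond under $V'_\beta$; moreover, from $z$ onward the continuation is forced to follow $V_\beta$-matches, which are unchanged by the swap. I would then perform the following surgery: replace each such local segment by the $V_\beta$-segment $\cdots > b < bv_{k+1} > v_{k+1} < \cdots$ and continue the path using the $V_\beta$-match at $v_{k+1}$. A symmetric surgery handles case (ii), rerouting each $\cdots > bv_{k+1} < bzv_{k+1} > \sigma < \cdots$ through the old pair $(bz,bzv_{k+1})$.

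The main obstacle is to verify that the spliced sequence is genuinely a closed $V_\beta$-path: one must check that the spliced segments concatenate into a single alternating sequence of the correct dimension type, that no cell is repeated in violation of the non-repetition condition, and that the total sequence still returns to its starting point. Here one uses the specific structure of Zhukova's construction near the barycenter of $\beta$, the steepest-descent choice of $\alpha'$ (which governs how $V_\beta$ interacts with the cofaces of $b$), and the injectivity of $f$ to rule out degenerate coincidences. Once this finite combinatorial check is completed, the existence of a closed $V_\beta$-path contradicts Zhukova's acyclicity, and the lemma follows.
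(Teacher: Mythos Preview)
Your surgery idea is natural, but as stated it does not yield a proof, and the gap you flag as ``the main obstacle'' is in fact fatal to the approach rather than a routine check.

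Consider case (i). A closed $(0,1)$ $V'_\beta$-path through the diamond has the form
\[
b < bz > z < \beta_1 > \alpha_1 < \cdots > b,
\]
where the segment from $z$ back to $b$ uses only matches outside the diamond, hence agrees with $V_\beta$. Your surgery replaces $b<bz>z$ by $b<bv_{k+1}>v_{k+1}$ and then proposes to ``continue the path using the $V_\beta$-match at $v_{k+1}$.'' But the $V_\beta$-trajectory out of $v_{k+1}$ has no relation whatsoever to the $V_\beta$-trajectory out of $z$; you have discarded the only portion of $\gamma$ that was known to return to $b$, and replaced it with a path that has no reason to close up. The steepest-descent choice of $\alpha'$ and the injectivity of $f$ do not help here: those choices were made for bookkeeping of gradient paths in $K$, not to create any link between the forward orbits of $v_{k+1}$ and of $z$ in $\sd(\beta)$. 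The same objection applies to case (ii).

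The paper does not attempt any such reduction to $V_\beta$. Instead it argues directly, pair by pair, that neither new match can sit on a cycle. For $b<bz$: the only $0$--$1$ arrow in $V'_\beta$ that can \emph{enter} $b$ is the old pair $a<ab$ (with $a$ the barycenter of $\alpha$), while the step $bz>z$ lands on the barycenter of the boundary face $\alpha'$ and hence leaves the interior of $\sd(\beta)$; so no closed $(0,1)$-path inside $\sd(\beta)$ can use $b<bz$. For $bv_{k+1}<bzv_{k+1}$: in $V_\beta$ the only edge that fed into $bzv_{k+1}$ was $bz$; after the swap the only feeding edge is $bv_{k+1}$, and since $bz$ is now matched \emph{down} with $b$, any $(1,2)$-path reaching $bzv_{k+1}$ can only continue by leaving $\sd(\beta)$. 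Finally, $(2,3)$-paths through $bzv_{k+1}$ are unchanged from $V_\beta$ and can only terminate there. This local inlet/outlet analysis is short and avoids the closure problem entirely; I recommend you recast your argument along these lines.
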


\begin{proof}
    The original construction already produced an acyclic vector field, so it suffices to show that the new pairs do not create cycles. The pair $b<bz$ cannot be part of a cycle since the only vertex-edge pair in $\sd(\beta)$ that could possibly precede it is the pair $a<ab$, where $a$ is the barycenter of $\alpha$, and the edge $bz$ leaves $\beta$ immediately. For the other pair, note that any gradient path in the original field into the $2$-cell $bzv_{k+1}$ from an edge entered only from $bz$. In the new field such a path can enter only from $bv_{k+1}$, and since $bz$ is now paired down with $b$, there is no way to continue the path except to leave $\sd(\beta)$. So no cycles can be created in 1-2 paths. Finally, any 2-3 path involving $bzv_{k+1}$ can only end at that simplex and since all 2-3 paths were acyclic to begin with, nothing has changed.
\end{proof}

Following this procedure we obtain a discrete gradient on $\sd(X)$, which we denote by $V_X'$. Note that the vector field $V_X'$ has the property that the gradient paths in $V_K$ correspond to $V_X'$-paths in the $1$-skeleton of $S_K$. Had we not made the modification to the construction of \cite{Zhukova2017} on the regular pairs, we would have lost this information in general. Indeed, if $\alpha<\beta$ is a regular pair in $V_K$ the modification creates a path in $S_K^{(1)}$ $a\to b\to a'$, where $a$ is the barycenter of $\alpha$, $b$ is the barycenter of $\beta$ and $a'$ is the barycenter of a codimension-$1$ face of $\beta$ along which the gradient is steepest. Of course, there could be other gradient paths exiting $\beta$ through some other face $\alpha''$, but we have kept track of the steepest paths in this way.

\begin{definition}\label{def:inducedgrad}
    Suppose $K=X\setminus T$ is an open simplicial complex with discrete gradient $V_K$. Extend $V_K$ to $V_X$, choose an associated discrete Morse function, and construct the discrete gradient $V_X'$ on $\sd(X)$. The \textbf{induced gradient on $S_K$} is the gradient $W$ obtained by restricting $V_X'$ to $S_K$. Thus, a pair $\alpha^{(k)}<\beta^{(k+1)}$ lies in $W$ if and only if both $\alpha$ and $\beta$ lie in $S_K$, and a simplex $\sigma$ is critical for $W$ if either (a) $\sigma$ is critical for $V_X$ and lies in $S_K$, or (b) $\sigma$ is part of a regular pair in $V_X'$ in which the other simplex does not lie in $S_K$.
\end{definition}

That $W$ is a gradient follows from the fact that no closed paths can be created by removing pairs. We note the following about $W$.

\begin{proposition}\label{prop: gradient on S_K} Suppose $K$ is an open simplicial complex with discrete gradient $V_K$.
\begin{enumerate}
    \item If $\sigma$ is a critical $k$-cell in $V_K$ of height $i$, then $W$ has a critical $i$-cell in $S_K$, lying in the interior of $\sigma$. 
    \item There is an injection from the set of critical cells of $V_K$ to the set of critical cells of $W$.
\end{enumerate}
\end{proposition}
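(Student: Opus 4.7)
The plan is to build an explicit assignment $\sigma \mapsto \tau_\sigma$ from critical cells of $V_K$ to critical cells of $W$ and show it has the required properties. First observe that since $\sigma$ is a critical cell of $V_K$, it remains critical in $V_X$: the extension $V_K \to V_X$ introduces no new pairings involving cells of $K$, it merely declares every cell of $T$ to be critical. In the construction of $V_X'$ on $\sd(X)$, then, some $k$-cell $C_\sigma$ of $\sd(\sigma)$ is chosen to be critical (the precise choice being governed by the vertex ordering of $\sigma$), and $C_\sigma$ lies in the interior of $\sigma$.

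For part (1), I would treat the two cases $i=k$ and $i<k$ separately. When $i=k$, the vertex ordering can be chosen so that Zhukova's procedure selects a critical $k$-cell $C_\sigma$ whose vertices correspond to a maximal chain of length $i$ in $K$ ending at $\sigma$; such a chain exists by the height hypothesis. Then $C_\sigma \in S_K$, and by condition (a) of Definition \ref{def:inducedgrad} it is the desired critical $i$-cell $\tau_\sigma := C_\sigma$ of $W$. When $i<k$, the modification described in the paragraph preceding Definition \ref{def:inducedgrad} ensures that $C_\sigma$ has an $i$-face $\tau_\sigma \in S_K$ (corresponding to a maximal chain of length $i$ in $K$ ending at $\sigma$) that is paired in $V_X'$ with some $(i+1)$-simplex $\tau'_\sigma \subseteq \sd(\sigma)$.

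To conclude via condition (b) that $\tau_\sigma$ is critical in $W$, the key step is to verify $\tau'_\sigma \notin S_K$. Since $\tau'_\sigma \supset \tau_\sigma$ is a codimension-one face pair in $\sd(\sigma)$, its vertices correspond to a chain of $i+2$ faces of $\sigma$, extending the chain represented by $\tau_\sigma$ by one additional face $G$. This chain still ends at $\sigma$ (which is the top face). If $\tau'_\sigma$ were in $S_K$, then all $i+2$ faces, including $G$, would lie in $K$, producing a chain of length $i+1$ in $K$ ending at $\sigma$ and contradicting $\mathrm{height}(\sigma)=i$. Hence $\tau'_\sigma \notin S_K$ and $\tau_\sigma$ is critical in $W$. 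In both cases $\tau_\sigma$ contains the barycenter of $\sigma$ as a vertex (the chain represented always terminates at $\sigma$), so $\tau_\sigma$ lies in the interior of $\sigma$, completing part (1).

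Part (2) is then immediate from the construction. The assignment $\sigma \mapsto \tau_\sigma$ lands in the critical cells of $W$, and for distinct $\sigma \ne \sigma'$ the cells $\tau_\sigma$ and $\tau_{\sigma'}$ contain as vertices the barycenters of $\sigma$ and $\sigma'$ respectively; since these barycenters are distinct vertices of $\sd(X)$, $\tau_\sigma \ne \tau_{\sigma'}$, giving the injection. The main obstacle I anticipate is the verification in the $i<k$ case of part (1), which requires carefully tracking how simplices of $\sd(\sigma)$ correspond to chains of faces of $\sigma$ and using the maximality forced by the height hypothesis to push the paired simplex $\tau'_\sigma$ outside of $S_K$.
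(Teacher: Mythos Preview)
Your proposal is correct and follows essentially the same approach as the paper's proof, which is quite terse: the paper simply observes that the construction arranged for an $i$-simplex in $\sd(\sigma)$ to be paired with an $(i+1)$-simplex, and the latter cannot lie in $S_K$ since the height of $\sigma$ is $i$. Your argument fills in the details of this observation, including the explicit injectivity argument via barycenters for part (2); the one assertion that could use a line of justification is that $\tau_\sigma$ necessarily contains the barycenter of $\sigma$, but this follows because any $i$-face of $C_\sigma$ lying in $S_K$ must use exactly the $i+1$ vertices of $C_\sigma$ corresponding to faces of $\sigma$ in $K$, and $\sigma$ itself is always among these.
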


\begin{proof}
    For (1), this is because we arranged for there to be an $i$-simplex in $\sd(\sigma)$ paired with an $(i+1)$-simplex and since the height of $\sigma$ is $i$, this $(i+1)$-simplex cannot lie in $S_K$. Statement (2) follows at once. 
\end{proof}

\begin{example}
    To illustrate why we made the alteration to guarantee the critical cell in (1), consider the portion of an open complex shown in Figure \ref{fig:guarantee}. It shows a critical $2$-simplex $\sigma$ of height $1$ in $K$ with only one of its edges included in $K$; the remainder of the boundary of $\sigma$ lies in $T$ ($T$ is shown in red). The corresponding portion of the order complex $S_K$ is in blue; it consists of the vertex $b$ and the edge $f$. It is clear that $\sigma$ does not affect the topology of $K$ since we could deform it to the edge $e$. The figure on the left illustrates the effect of choosing  the $2$-simplex $\tau$ in $\sd(\sigma)$ to be critical and employing the procedure in \cite{Zhukova2017} without alteration. In this case we  have the barycenter $b$ paired with the edge $f$, yielding no critical cells in $W$ in the interior of $\sigma$ (since both $b$ and $f$ lie in $S_K$). In principle, this is not an issue since $W$ will determine the homotopy type of $|S_K|\simeq |K|$, but we will have lost track of the original critical cell $\sigma$ from $V_K$. Another option (figure on the right) would be to choose the cell $\tau'$ as critical, and then we would obtain two critical cells in $W$: the barycenter $b$ and the edge $f$. This is fine, of course, and since there is a unique $W$-path from $f$ to $b$ we could cancel it. We could also have chosen the $2$-simplex sharing the edge $f$ with $\tau'$ to be critical with the same effect. As we see in Example \ref{ex:runningorder} below, we can have additional critical cells appear in the interior of a $V_K$-critical cell; these may or may not be able to be canceled.
\end{example}

\begin{figure}

   $$
{\begin{tikzpicture}[scale = 1.75,decoration={markings,mark=at position 0.6 with {\arrow{triangle 60}}},]

\filldraw[red!30] (0,0)--(1.5,0)--(1.5,0.9)--(0);

%vertices
\node[inner sep=2pt, circle, fill=red] (0) at (0,0) [draw] {};
\node[inner sep=2pt, circle, fill=red] (1) at (3,0) [draw] {};
\node[inner sep=2pt, circle, fill=red] (2) at (1.5,2.6) [draw] {};
\node[inner sep=2pt, circle, fill=blue!70] (3) at (1.5,.9) [draw] {};

%edges
\draw[thick, draw = red]  (0)--(1) node[midway, left] {};
\draw[thick, draw = red]  (0)--(2) node[midway, left] {};
\draw[thick, draw = black]  (0)--(3) node[midway, left] {};
\draw[thick, draw = black]  (1)--(3) node[midway, right] {};
\draw[thick, draw = black]  (2)--(3) node[midway, right] {};
\draw[thick,draw = black] (2)--(1) node[midway, right] {\huge{$e$}};
\draw[thick,draw = black] (1.5,0)--(3) node[midway, right] {};
\draw[thick,draw = black] (1.5/2,2.6/2)--(3) node[midway, right] {};
\draw[thick,draw = black] (1.5,0)--(3) node[midway, right] {};
\draw[thick,draw = blue!70,->-] (3)--(4.5/2,2.6/2) node[midway, above] {\huge{$f$}};

%labels
\node[]  at  (1,.3) {\huge{$\tau$}};
\node[anchor = north west ]  at (3) {\huge{$b$}};

%gradient
\draw[thick,->] (1.5,0.8/2)--(1.9,0.8/2);
\draw[thick,->] (2.15,0.5)--(2.3,0.75);
\draw[thick,->] (0.85,0.55)--(0.85,0.9);
\draw[thick,->] (1.1,1.14)--(1.3,1.45);
\draw[thick,->] (1.5,1.6)--(1.9,1.6);

\filldraw[red!30] (5.5,0.9)--(7,0)--(6.25,2.6/2)--(5.5,0.9);

%vertices
\node[inner sep=2pt, circle, fill=red] (4) at (4,0) [draw] {};
\node[inner sep=2pt, circle, fill=red] (5) at (7,0) [draw] {};
\node[inner sep=2pt, circle, fill=red] (6) at (5.5,2.6) [draw] {};
\node[inner sep=2pt, circle, fill=blue!70] (7) at (5.5,.9) [draw] {};

%edges
\draw[thick, draw = red]  (4)--(5) node[midway, left] {};
\draw[thick, draw = red]  (4)--(6) node[midway, left] {};
\draw[thick, draw = black,->-] (7)--(4) node[midway, left] {};
\draw[thick, draw = black]  (5)--(7) node[midway, right] {};
\draw[thick, draw = black]  (6)--(7) node[midway, right] {};
\draw[thick,draw = black] (6)--(5) node[midway, right] {\huge{$e$}};
\draw[thick,draw = black] (5.5,0)--(7) node[midway, right] {};
\draw[thick,draw = black] (4.75,2.6/2)--(7) node[midway, right] {};
\draw[thick,draw = black] (5.5,0)--(7) node[midway, right] {};
\draw[thick,draw = blue!70] (6.25,2.6/2)--(7) node[midway, above] {\huge{$f$}};

%labels
\node[]  at  (6.3,.8) {\huge{$\tau'$}};
\node[anchor = north west ]  at (7) {\huge{$b$}};

%gradient
\draw[thick,->] (5.5,0.8/2)--(5.1,0.8/2);
\draw[thick,->] (6.15,0.5)--(6,0.25);
\draw[thick,->] (5.85,1.07)--(5.6,1.4);
\draw[thick,->] (5.1,1.14)--(4.9,0.83);
\draw[thick,->] (5.5,1.6)--(5.1,1.6);

\end{tikzpicture}} 
$$

    \caption{\label{fig:guarantee} A justification of the modification to the procedure of \cite{Zhukova2017} on $\sd(\sigma)$. On the left is the result of choosing $\tau$ to be critical; this pairs $b$ and $f$ in $W$. On the right is the result of choosing $\tau'$ to be critical; this leaves $b$ and $f$ unpaired in $W$ since they are each paired with a simplex not in $S_K$.}
\end{figure}

Note that the critical $i$-cell in (1) of Proposition \ref{prop: gradient on S_K} was chosen to enter the sublevel set filtration of $S_K$ as late as possible since we chose it to be related to the least steep path out of $\sigma$. Also, observe that in general $W$ will have more critical cells than $V_K$. This can occur, for example, when a gradient path in $V_K$ splits. Our construction forces a particular pairing in $\sd(X)$, favoring one gradient path in $V_K$, and this could lead to a simplex in another path being paired with something not in $S_K$. 

\begin{example}\label{ex:runningorder}
    Let us consider the construction applied to the gradient $V_K$ of Figure \ref{fig:runningexample}. In Figure \ref{fig:ordergradient} we show the order complex $S_K$ along with the induced gradient $W$. The critical cells are shown in red. There are two critical vertices corresponding to the original critical edges $e_1$ and $e_2$ in $V_K$. There are three critical edges. One of these corresponds to the critical $2$-simplex $\sigma$, of height $1$; we have labeled it in the diagram. The extra critical edge adjacent to the vertex corresponding to $e_1$ accounts for the additional change in topology at $\sigma$. The critical edge adjacent to the vertex corresponding to $e_2$ lies in the $2$-simplex $\tau$ whose inclusion in $K$ created a $1$-cycle. Note that the gradient path in $V_K$ splits at $\tau$ (there are two $V_K$-paths from $\sigma$ to $e_2$). 
\end{example}

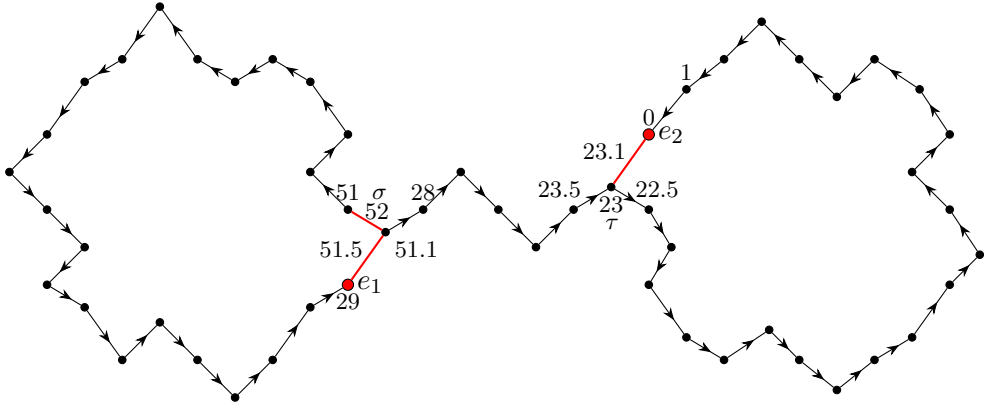
\begin{figure}
$$
\begin{tikzpicture}[scale = 2,decoration={markings,mark=at position 0.7 with {\arrow{Stealth}}},]

%Order complex verticies
\node[inner sep=1.5pt, circle, fill=red] (O0) at (4.5,2) [draw] {};
\node[inner sep=1pt, circle, fill=black] (O1) at (4.75,2.3) [draw] {};
\node[inner sep=1pt, circle, fill=black] (O2) at (5,2.5) [draw] {};
\node[inner sep=1pt, circle, fill=black] (O3) at (5.25,2.75) [draw] {};
\node[inner sep=1pt, circle, fill=black] (O4) at (5.5,2.5) [draw] {};
\node[inner sep=1pt, circle, fill=black] (O5) at (5.75,2.25) [draw] {};
\node[inner sep=1pt, circle, fill=black] (O6) at (6,2.5) [draw] {};
\node[inner sep=1pt, circle, fill=black] (O7) at (6.3,2.3) [draw] {};
\node[inner sep=1pt, circle, fill=black] (O8) at (6.5,2) [draw] {};
\node[inner sep=1pt, circle, fill=black] (O9) at (6.25,1.75) [draw] {};
\node[inner sep=1pt, circle, fill=black] (O10) at (6.5,1.5) [draw] {};
\node[inner sep=1pt, circle, fill=black] (O11) at (6.7,1.2) [draw] {};
\node[inner sep=1pt, circle, fill=black] (O12) at (6.5,1) [draw] {};
\node[inner sep=1pt, circle, fill=black] (O13) at (6.25,.65) [draw] {};
\node[inner sep=1pt, circle, fill=black] (O14) at (6,.5) [draw] {};
\node[inner sep=1pt, circle, fill=black] (O15) at (5.75,.3) [draw] {};
\node[inner sep=1pt, circle, fill=black] (O16) at (5.5,.5) [draw] {};
\node[inner sep=1pt, circle, fill=black] (O17) at (5.3,.7) [draw] {};
\node[inner sep=1pt, circle, fill=black] (O18) at (5,.5) [draw] {};
\node[inner sep=1pt, circle, fill=black] (O19) at (4.75,.65) [draw] {};
\node[inner sep=1pt, circle, fill=black] (O20) at (4.5,1) [draw] {};
\node[inner sep=1pt, circle, fill=black] (O21) at (4.65,1.25) [draw] {};
\node[inner sep=1pt, circle, fill=black] (O22) at (4.5,1.5) [draw] {};
\node[inner sep=1pt, circle, fill=black] (O23) at (4.25,1.65) [draw] {};
\node[inner sep=1pt, circle, fill=black] (O24) at (4,1.5) [draw] {};
\node[inner sep=1pt, circle, fill=black] (O25) at (3.75,1.25) [draw] {};
\node[inner sep=1pt, circle, fill=black] (O26) at (3.5,1.5) [draw] {};
\node[inner sep=1pt, circle, fill=black] (O27) at (3.25,1.75) [draw] {};
\node[inner sep=1pt, circle, fill=black] (O28) at (3,1.5) [draw] {};

\node[inner sep=1.5pt, circle, fill=red] (O29) at (2.5,1) [draw] {};
\node[inner sep=1pt, circle, fill=black] (O30) at (2.25,.85) [draw] {};
\node[inner sep=1pt, circle, fill=black] (O31) at (2,.5) [draw] {};
\node[inner sep=1pt, circle, fill=black] (O32) at (1.75,.25) [draw] {};
\node[inner sep=1pt, circle, fill=black] (O33) at (1.5,.5) [draw] {};
\node[inner sep=1pt, circle, fill=black] (O34) at (1.25,.75) [draw] {};
\node[inner sep=1pt, circle, fill=black] (O35) at (1,.5) [draw] {};
\node[inner sep=1pt, circle, fill=black] (O36) at (.75,.85) [draw] {};
\node[inner sep=1pt, circle, fill=black] (O37) at (.5,1) [draw] {};
\node[inner sep=1pt, circle, fill=black] (O38) at (.75,1.25) [draw] {};
\node[inner sep=1pt, circle, fill=black] (O39) at (.5,1.5) [draw] {};
\node[inner sep=1pt, circle, fill=black] (O40) at (.25,1.75) [draw] {};
\node[inner sep=1pt, circle, fill=black] (O41) at (.5,2) [draw] {};
\node[inner sep=1pt, circle, fill=black] (O42) at (.75,2.35) [draw] {};
\node[inner sep=1pt, circle, fill=black] (O43) at (1,2.5) [draw] {};
\node[inner sep=1pt, circle, fill=black] (O44) at (1.25,2.85) [draw] {};
\node[inner sep=1pt, circle, fill=black] (O45) at (1.5,2.5) [draw] {};
\node[inner sep=1pt, circle, fill=black] (O46) at (1.75,2.35) [draw] {};
\node[inner sep=1pt, circle, fill=black] (O47) at (2,2.5) [draw] {};
\node[inner sep=1pt, circle, fill=black] (O48) at (2.25,2.35) [draw] {};
\node[inner sep=1pt, circle, fill=black] (O49) at (2.5,2) [draw] {};
\node[inner sep=1pt, circle, fill=black] (O50) at (2.25,1.75) [draw] {};
\node[inner sep=1pt, circle, fill=black] (O51) at (2.5,1.5) [draw] {};
\node[inner sep=1pt, circle, fill=black] (O52) at (2.75,1.35) [draw] {};

%Order complex edges
\draw[->-]  (O1)--(O0) node[midway, left] {};
\draw[->-]  (O2)--(O1) node[midway, left] {};
\draw[->-]  (O3)--(O2) node[midway, left] {};
\draw[->-]  (O4)--(O3) node[midway, left] {};
\draw[->-]  (O5)--(O4) node[midway, left] {};
\draw[->-]  (O6)--(O5) node[midway, left] {};
\draw[->-]  (O7)--(O6) node[midway, left] {};
\draw[->-]  (O8)--(O7) node[midway, left] {};
\draw[->-]  (O9)--(O8) node[midway, left] {};
\draw[->-]  (O10)--(O9) node[midway, left] {};
\draw[->-]  (O11)--(O10) node[midway, left] {};
\draw[->-]  (O12)--(O11) node[midway, left] {};
\draw[->-]  (O13)--(O12) node[midway, left] {};
\draw[->-]  (O14)--(O13) node[midway, left] {};
\draw[->-]  (O15)--(O14) node[midway, left] {};
\draw[->-]  (O16)--(O15) node[midway, left] {};
\draw[->-]  (O17)--(O16) node[midway, left] {};
\draw[->-]  (O18)--(O17) node[midway, left] {};
\draw[->-]  (O19)--(O18) node[midway, left] {};
\draw[->-]  (O20)--(O19) node[midway, left] {};
\draw[->-]  (O21)--(O20) node[midway, left] {};
\draw[->-]  (O22)--(O21) node[midway, left] {};
\draw[->-]  (O23)--(O22) node[midway, left] {};
\draw[->-]  (O24)--(O23) node[midway, left] {};
\draw[->-]  (O25)--(O24) node[midway, left] {};
\draw[->-]  (O26)--(O25) node[midway, left] {};
\draw[->-]  (O27)--(O26) node[midway, left] {};
\draw[->-]  (O28)--(O27) node[midway, left] {};
\draw[red, thick]  (O0)--(O23) node[midway, left] {};

\draw[->-]  (O30)--(O29) node[midway, above, left] {};
\draw[->-]  (O31)--(O30) node[midway, left] {};
\draw[->-]  (O32)--(O31) node[midway, left] {};
\draw[->-]  (O33)--(O32) node[midway, left] {};
\draw[->-]  (O34)--(O33) node[midway, left] {};
\draw[->-]  (O35)--(O34) node[midway, left] {};
\draw[->-]  (O36)--(O35) node[midway, left] {};
\draw[->-]  (O37)--(O36) node[midway, left] {};
\draw[->-]  (O38)--(O37) node[midway, left] {};
\draw[->-]  (O39)--(O38) node[midway, left] {};
\draw[->-]  (O40)--(O39) node[midway, left] {};
\draw[->-]  (O41)--(O40) node[midway, left] {};
\draw[->-]  (O42)--(O41) node[midway, left] {};
\draw[->-]  (O43)--(O42) node[midway, left] {};
\draw[->-]  (O44)--(O43) node[midway, left] {};
\draw[->-]  (O45)--(O44) node[midway, left] {};
\draw[->-]  (O46)--(O45) node[midway, left] {};
\draw[->-]  (O47)--(O46) node[midway, left] {};
\draw[->-]  (O48)--(O47) node[midway, left] {};
\draw[->-]  (O49)--(O48) node[midway, left] {};
\draw[->-]  (O50)--(O49) node[midway, left] {};
\draw[->-]  (O51)--(O50) node[midway, left] {};
\draw[draw=red, thick]  (O52)--(O51) node[midway, above] {};
\draw[red, thick]  (O52)--(O29) node[midway, left] {};
\draw[->-]  (O52)--(O28) node[midway, left] {};

\node[anchor =  west ]  at (O29) {\large{$e_1$}};
\node[anchor =  west ]  at (O0) {\large{$e_2$}};

\node[anchor =  south ]  at (O0) {\small{$0$}};
\node[anchor =  south ]  at (O1) {\small{$1$}};
\node[anchor =  north ]  at (O23) {\small{$23$}};
\node[anchor =  south ]  at (O28) {\small{$28$}};
\node[anchor =  north ]  at (O29) {\small{$29$}};
\node[anchor =  south ]  at (O51) {\small{$51$}};
\node[anchor =  north west ]  at (O52) {\small{$51.1$}};

4.25,1.65

\node[anchor =  north west ]  at (4.15,1.5) {\small{$\tau$}};
\node[anchor =  north west ]  at (2.6, 1.7) {\small{$\sigma$}};
\node[anchor =  north west ]  at (2.55, 1.6) {\small{$52$}};
\node[anchor =  north west ]  at (2.25, 1.35) {\small{$51.5$}};
\node[anchor =  north west ]  at (4, 2) {\small{$23.1$}};
\node[anchor =  north west ]  at (4.35, 1.75) {\small{$22.5$}};

\node[anchor =  north west ]  at (3.7, 1.75) {\small{$23.5$}};

\end{tikzpicture}
$$
    \caption{\label{fig:ordergradient} The induced gradient on $S_K$ with some of the same labels as Figure \ref{fig:relevant}.}
\end{figure}

    %\centerline{\includegraphics[width=\textwidth]{figs/ordergradient.jpg}}

Recall that we fixed a discrete Morse function $f\colon X\to\zr$ and used it to make some choices when refining the vector field $V_X$. In particular, the function $f$ determines values $f(\sigma)$ for every simplex $\sigma$ in $K$. We wish to use this to induce a discrete Morse function with gradient $W$ on $S_K$. We will accomplish this by defining a function $F:\sd(X)\to\zr$ with gradient $V_X'$ and then restricting it to $S_K$. 

If $\sigma$ is a critical $k$-cell for $V_K$ of height $i$, denote by $\sigma'$ the associated critical $i$-cell for $V_X'$ contained in the interior of $\sigma$.  Set $F(\sigma') = f(\sigma)$. For every regular simplex $\alpha$ in $V_K$, denote the corresponding vertex of $\sd(X)$ by $v_\alpha$ and set $F(v_\alpha)= f(\alpha)$. This defines a partial function on the simplices of $\sd(X)$. 

\begin{proposition}\label{prop:extend}
    The function $F$ can be extended to an injective discrete Morse function on $\sd(X)$ with gradient $V_X'$.
\end{proposition}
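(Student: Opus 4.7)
The plan is to apply the extension principle from the preliminaries: given an acyclic modified Hasse diagram $\tilde H$ of $\sd(X)$ (with $V_X'$-pair edges reversed) and a partial function specified on some of its vertices, if the partial function respects the strict descending order along every directed path between prescribed vertices, then it extends to a discrete Morse function on $\sd(X)$ with gradient $V_X'$. The diagram $\tilde H$ is acyclic since $V_X'$ is a gradient. The prescribed values are $F(v_\alpha) = f(\alpha)$ at each barycenter $v_\alpha$ of a regular $V_K$-simplex $\alpha$, and $F(\sigma') = f(\sigma)$ at each chosen critical $i$-cell $\sigma' \subset \mathrm{int}(\sd(\sigma))$ for a critical $V_K$-simplex $\sigma$ of height $i$.

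The heart of the argument is verifying consistency of the prescribed values. I would prove this by showing that the construction of $V_X'$ from $V_X$ is engineered so that any directed $\tilde H$-path joining two prescribed vertices corresponds to a $V_X$-path in $X$ between the simplices of $X$ that those prescribed vertices represent. For a regular $V_K$-pair $\alpha < \beta$, the default Zhukova pairing $a < ab$ together with the modification $b < bz$ (where $z$ is the barycenter of the steepest-descent face $\alpha'$ of $\beta$) produces the $\tilde H$-segment $v_\alpha \to ab \to b \to bz \to v_{\alpha'}$, mirroring the $V_X$-move $\alpha < \beta > \alpha'$. At a critical cell $\sigma$ of height $i$, the deliberate choice of $\sigma'$ ensures that one of its $i$-faces lying in $S_K$ is paired with an $(i+1)$-simplex in $\sd(\sigma)$; descending paths leaving $\sigma'$ in $\tilde H$ must therefore pass through this face, and from there they track $V_X$-paths descending from $\sigma$. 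Because $f$ is a discrete Morse function with gradient $V_X$, the values of $f$ strictly decrease along any nontrivial $V_X$-path, so the prescribed $F$-values strictly decrease along every directed path between them in $\tilde H$.

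Once consistency is secured, the extension principle produces a function $F$ on all of $\sd(X)$ that is a discrete Morse function with gradient $V_X'$. Injectivity is arranged at the extension step by choosing the unassigned values generically within the open intervals dictated by the acyclic order; since we work over $\zr$, there is always room. The main obstacle is the path-correspondence verification in the second paragraph: directed paths in $\tilde H$ can wander across multiple local subdivisions $\sd(\beta)$ for various $\beta \in X$ and ascend into cells of all dimensions in $\sd(X)$, and one must confirm that every such path decomposes into local pieces that project, via the barycenters visited, to a legitimate $V_X$-path in $X$. This decomposition is exactly what the modified Zhukova procedure, together with the steepest-descent conventions used to pick regular pairings and the shallowest-path conventions used to pick critical cells, was designed to guarantee.
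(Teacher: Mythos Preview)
Your proposal is correct and follows the same overall strategy as the paper: invoke the extension principle from the preliminaries (an acyclic modified Hasse diagram supports a real-valued function decreasing along every directed edge, extending any consistent partial assignment) once consistency of the prescribed values has been checked, and then perturb to achieve injectivity.

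The one place where your route diverges from the paper's is the consistency verification. You aim to show that every directed $\tilde H$-path between two prescribed vertices projects to a genuine $V_X$-path in $X$, and then use that $f$ decreases along $V_X$-paths. The paper avoids this global path-tracking and instead uses a local fact (cited as \cite[Lemma 3.2]{JostZhang23}): for an injective discrete Morse function, a critical simplex has strictly larger $f$-value than every proper face and strictly smaller $f$-value than every proper coface. From this, any descending path in the unmodified Hasse diagram out of a critical $\sigma'\subset\sd(\sigma)$ reaches only vertices with $f$-value below $f(\sigma)$, and any path entering $\sigma'$ comes from something with $f$-value above $f(\sigma)$; reversing the $V_X'$-pair edges does not disturb this. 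Your path-correspondence claim is stronger than what is needed and, as you note yourself, is the hardest part to pin down rigorously (paths in $\tilde H$ can climb through higher-dimensional cells of $\sd(X)$ in ways that do not obviously shadow a single $V_X$-path). The paper's argument sidesteps that difficulty at the cost of being terser; your argument, if carried out in full, would give a more explicit picture of how the two gradients interact.
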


\begin{proof}
    The function $F$ specifies values only on some of the vertices of $\sd(X)$, and on the higher-dimensional critical simplices corresponding to critical cells of $V_K$ by making use of a discrete Morse function $f\colon X\to\zr$. One can show that if $\sigma$ is a critical simplex for an injective discrete Morse function $f$, then for any $\tau >\sigma > \nu$, $f(\tau)>f(\sigma)>f(\nu)$ (e.g. \cite[Lemma 3.2]{JostZhang23}). Thus, for any critical $i$-simplex $\alpha$ in $\sd(X)$, a descending path from $\alpha$ in the unmodified Hasse diagram ends at a vertex with a lower $f$-value, and any descending path ending at $\alpha$ came from a critical cell with a higher $f$-value. After modifying the Hasse diagram by reversing arrows corresponding to regular pairs in $V_X'$, it is therefore possible to extend $F$ to an injective function on the modified Hasse diagram compatible with these predetermined values.
\end{proof}

\begin{remark}
    Restricting the function $F:\sd(X)\to\zr$ to $S_K$, we obtain a discrete Morse function, which we still denote by $F:S_K\to \zr$, with gradient $W$. Note that $F$ may have more critical values on $S_K$ than on $\sd(X)$, but if a cell $\sigma\in S_K$ is critical for $F:\sd(X)\to\zr$, it remains critical in $S_K$. 
\end{remark}

Pseudocode for the procedure above appears in Algorithm \ref{alg: main}. We also describe Zhukova's procedure along with our modifications in Algorithms \ref{alg: critical} and \ref{alg: regularpair}. These may be found in Appendix \ref{apdx}.

Figure \ref{fig:fullhasse} shows the full Hasse diagram of $S_K$ from Figure \ref{fig:ordergradient}. The preset values of $F$ on the vertices corresponding to the regular simplices are shown in black and those corresponding to the higher dimensional critical faces are shown in red. There is only one predetermined value on a critical edge, namely the edge corresponding to the critical $2$-simplex $\sigma$, which is given the value $52$. Much of this diagram can be collapsed away by following the gradient. Proposition \ref{prop:extend} asserts that we can extend this function to a discrete Morse function on $\sd(X)$ and then restrict to a function on $S_K$.   The relevant simplices for one possible extension are shown in Figure \ref{fig:relevant}. The values in green are generated by extending the preset values; these are not unique, of course, but they do give a discrete Morse function on $S_K$.

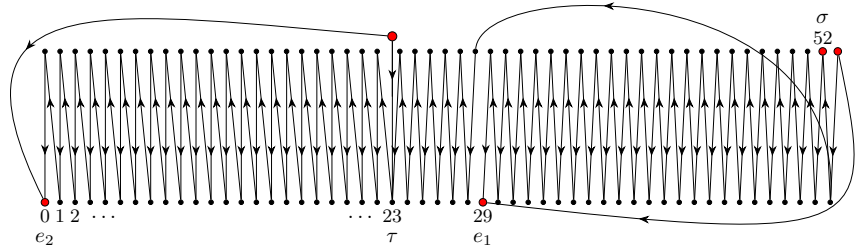
\begin{figure}
$$\resizebox{\textwidth}{!}{
\begin{tikzpicture}[scale = 2.5,decoration={markings, mark=at position 0.7 with {\arrow{Stealth}}},]

%Order complex verticies row 0
\node[inner sep=1.25pt, circle, fill=red] (00) at (0.0,0) [draw] {};
\node[inner sep=.75pt, circle, fill=black] (01) at (.1,0) [draw] {};
\node[inner sep=.75pt, circle, fill=black] (02) at (.2,0) [draw] {};
\node[inner sep=.75pt, circle, fill=black] (03) at (.3,0) [draw] {};
\node[inner sep=.75pt, circle, fill=black] (04) at (.4,0) [draw] {};
\node[inner sep=.75pt, circle, fill=black] (05) at (.5,0) [draw] {};
\node[inner sep=.75pt, circle, fill=black] (06) at (.6,0) [draw] {};
\node[inner sep=.75pt, circle, fill=black] (07) at (.7,0) [draw] {};
\node[inner sep=.75pt, circle, fill=black] (08) at (.8,0) [draw] {};
\node[inner sep=.75pt, circle, fill=black] (09) at (.9,0) [draw] {};
\node[inner sep=.75pt, circle, fill=black] (010) at (1.0,0) [draw] {};
\node[inner sep=.75pt, circle, fill=black] (011) at (1.1,0) [draw] {};
\node[inner sep=.75pt, circle, fill=black] (012) at (1.2,0) [draw] {};
\node[inner sep=.75pt, circle, fill=black] (013) at (1.3,0) [draw] {};
\node[inner sep=.75pt, circle, fill=black] (014) at (1.4,0) [draw] {};
\node[inner sep=.75pt, circle, fill=black] (015) at (1.5,0) [draw] {};
\node[inner sep=.75pt, circle, fill=black] (016) at (1.6,0) [draw] {};
\node[inner sep=.75pt, circle, fill=black] (017) at (1.7,0) [draw] {};
\node[inner sep=.75pt, circle, fill=black] (018) at (1.8,0) [draw] {};
\node[inner sep=.75pt, circle, fill=black] (019) at (1.9,0) [draw] {};
\node[inner sep=.75pt, circle, fill=black] (020) at (2.0,0) [draw] {};
\node[inner sep=.75pt, circle, fill=black] (021) at (2.1,0) [draw] {};
\node[inner sep=.75pt, circle, fill=black] (022) at (2.2,0) [draw] {};
\node[inner sep=.75pt, circle, fill=black] (023) at (2.3,0) [draw] {};
\node[inner sep=.75pt, circle, fill=black] (024) at (2.4,0) [draw] {};
\node[inner sep=.75pt, circle, fill=black] (025) at (2.5,0) [draw] {};
\node[inner sep=.75pt, circle, fill=black] (026) at (2.6,0) [draw] {};
\node[inner sep=.75pt, circle, fill=black] (027) at (2.7,0) [draw] {};
\node[inner sep=.75pt, circle, fill=black] (028) at (2.8,0) [draw] {};
\node[inner sep=1.255pt, circle, fill=red] (029) at (2.9,0) [draw] {};
\node[inner sep=.75pt, circle, fill=black] (030) at (3.0,0) [draw] {};
\node[inner sep=.75pt, circle, fill=black] (031) at (3.1,0) [draw] {};
\node[inner sep=.75pt, circle, fill=black] (032) at (3.2,0) [draw] {};
\node[inner sep=.75pt, circle, fill=black] (033) at (3.3,0) [draw] {};
\node[inner sep=.75pt, circle, fill=black] (034) at (3.4,0) [draw] {};
\node[inner sep=.75pt, circle, fill=black] (035) at (3.5,0) [draw] {};
\node[inner sep=.75pt, circle, fill=black] (036) at (3.6,0) [draw] {};
\node[inner sep=.75pt, circle, fill=black] (037) at (3.7,0) [draw] {};
\node[inner sep=.75pt, circle, fill=black] (038) at (3.8,0) [draw] {};
\node[inner sep=.75pt, circle, fill=black] (039) at (3.9,0) [draw] {};
\node[inner sep=.75pt, circle, fill=black] (040) at (4.0,0) [draw] {};
\node[inner sep=.75pt, circle, fill=black] (041) at (4.1,0) [draw] {};
\node[inner sep=.75pt, circle, fill=black] (042) at (4.2,0) [draw] {};
\node[inner sep=.75pt, circle, fill=black] (043) at (4.3,0) [draw] {};
\node[inner sep=.75pt, circle, fill=black] (044) at (4.4,0) [draw] {};
\node[inner sep=.75pt, circle, fill=black] (045) at (4.5,0) [draw] {};
\node[inner sep=.75pt, circle, fill=black] (046) at (4.6,0) [draw] {};
\node[inner sep=.75pt, circle, fill=black] (047) at (4.7,0) [draw] {};
\node[inner sep=.75pt, circle, fill=black] (048) at (4.8,0) [draw] {};
\node[inner sep=.75pt, circle, fill=black] (049) at (4.9,0) [draw] {};
\node[inner sep=.75pt, circle, fill=black] (050) at (5.0,0) [draw] {};
\node[inner sep=.75pt, circle, fill=black] (051) at (5.1,0) [draw] {};
\node[inner sep=.75pt, circle, fill=black] (052) at (5.2,0) [draw] {};

%Order complex verticies row 1
\node[inner sep=.75pt, circle, fill=black] (10) at (0.0,1) [draw] {};
\node[inner sep=.75pt, circle, fill=black] (11) at (.1,1) [draw] {};
\node[inner sep=.75pt, circle, fill=black] (12) at (.2,1) [draw] {};
\node[inner sep=.75pt, circle, fill=black] (13) at (.3,1) [draw] {};
\node[inner sep=.75pt, circle, fill=black] (14) at (.4,1) [draw] {};
\node[inner sep=.75pt, circle, fill=black] (15) at (.5,1) [draw] {};
\node[inner sep=.75pt, circle, fill=black] (16) at (.6,1) [draw] {};
\node[inner sep=.75pt, circle, fill=black] (17) at (.7,1) [draw] {};
\node[inner sep=.75pt, circle, fill=black] (18) at (.8,1) [draw] {};
\node[inner sep=.75pt, circle, fill=black] (19) at (.9,1) [draw] {};
\node[inner sep=.75pt, circle, fill=black] (110) at (1.0,1) [draw] {};
\node[inner sep=.75pt, circle, fill=black] (111) at (1.1,1) [draw] {};
\node[inner sep=.75pt, circle, fill=black] (112) at (1.2,1) [draw] {};
\node[inner sep=.75pt, circle, fill=black] (113) at (1.3,1) [draw] {};
\node[inner sep=.75pt, circle, fill=black] (114) at (1.4,1) [draw] {};
\node[inner sep=.75pt, circle, fill=black] (115) at (1.5,1) [draw] {};
\node[inner sep=.75pt, circle, fill=black] (116) at (1.6,1) [draw] {};
\node[inner sep=.75pt, circle, fill=black] (117) at (1.7,1) [draw] {};
\node[inner sep=.75pt, circle, fill=black] (118) at (1.8,1) [draw] {};
\node[inner sep=.75pt, circle, fill=black] (119) at (1.9,1) [draw] {};
\node[inner sep=.75pt, circle, fill=black] (120) at (2.0,1) [draw] {};
\node[inner sep=.75pt, circle, fill=black] (121) at (2.1,1) [draw] {};
\node[inner sep=.75pt, circle, fill=black] (122) at (2.2,1) [draw] {};
\node[inner sep=1.5pt, circle, fill=red] (123R) at (2.3,1.1) [draw] {};
\node[inner sep=.75pt, circle, fill=black] (123) at (2.3+.05,1) [draw] {};
\node[inner sep=.75pt, circle, fill=black] (124) at (2.4+.05,1) [draw] {};
\node[inner sep=.75pt, circle, fill=black] (125) at (2.5+.05,1) [draw] {};
\node[inner sep=.75pt, circle, fill=black] (126) at (2.6+.05,1) [draw] {};
\node[inner sep=.75pt, circle, fill=black] (127) at (2.7+.05,1) [draw] {};
\node[inner sep=.75pt, circle, fill=black] (128) at (2.8+.05,1) [draw] {};
\node[inner sep=.75pt, circle, fill=black] (129) at (2.9+.05,1) [draw] {};
\node[inner sep=.75pt, circle, fill=black] (130) at (3.0+.05,1) [draw] {};
\node[inner sep=.75pt, circle, fill=black] (131) at (3.1+.05,1) [draw] {};
\node[inner sep=.75pt, circle, fill=black] (132) at (3.2+.05,1) [draw] {};
\node[inner sep=.75pt, circle, fill=black] (133) at (3.3+.05,1) [draw] {};
\node[inner sep=.75pt, circle, fill=black] (134) at (3.4+.05,1) [draw] {};
\node[inner sep=.75pt, circle, fill=black] (135) at (3.5+.05,1) [draw] {};
\node[inner sep=.75pt, circle, fill=black] (136) at (3.6+.05,1) [draw] {};
\node[inner sep=.75pt, circle, fill=black] (137) at (3.7+.05,1) [draw] {};
\node[inner sep=.75pt, circle, fill=black] (138) at (3.8+.05,1) [draw] {};
\node[inner sep=.75pt, circle, fill=black] (139) at (3.9+.05,1) [draw] {};
\node[inner sep=.75pt, circle, fill=black] (140) at (4.0+.05,1) [draw] {};
\node[inner sep=.75pt, circle, fill=black] (141) at (4.1+.05,1) [draw] {};
\node[inner sep=.75pt, circle, fill=black] (142) at (4.2+.05,1) [draw] {};
\node[inner sep=.75pt, circle, fill=black] (143) at (4.3+.05,1) [draw] {};
\node[inner sep=.75pt, circle, fill=black] (144) at (4.4+.05,1) [draw] {};
\node[inner sep=.75pt, circle, fill=black] (145) at (4.5+.05,1) [draw] {};
\node[inner sep=.75pt, circle, fill=black] (146) at (4.6+.05,1) [draw] {};
\node[inner sep=.75pt, circle, fill=black] (147) at (4.7+.05,1) [draw] {};
\node[inner sep=.75pt, circle, fill=black] (148) at (4.8+.05,1) [draw] {};
\node[inner sep=.75pt, circle, fill=black] (149) at (4.9+.05,1) [draw] {};
\node[inner sep=.75pt, circle, fill=black] (150) at (5.0+.05,1) [draw] {};
\node[inner sep=1.25pt, circle, fill=red] (151) at (5.1+.05,1) [draw] {};
\node[inner sep=1.25pt, circle, fill=red] (152) at (5.2+.05,1) [draw] {};

Order complex edges
\draw[->-]  (10)--(00) node[midway, left] {};
\draw[->-]  (01)--(10) node[midway, left] {};
\draw[->-]  (11)--(01) node[midway, left] {};
\draw[->-]  (02)--(11) node[midway, left] {};
\draw[->-]  (12)--(02) node[midway, left] {};
\draw[->-]  (03)--(12) node[midway, left] {};
\draw[->-]  (13)--(03) node[midway, left] {};
\draw[->-]  (04)--(13) node[midway, left] {};
\draw[->-]  (14)--(04) node[midway, left] {};
\draw[->-]  (05)--(14) node[midway, left] {};
\draw[->-]  (15)--(05) node[midway, left] {};
\draw[->-]  (06)--(15) node[midway, left] {};
\draw[->-]  (16)--(06) node[midway, left] {};
\draw[->-]  (07)--(16) node[midway, left] {};
\draw[->-]  (17)--(07) node[midway, left] {};
\draw[->-]  (08)--(17) node[midway, left] {};
\draw[->-]  (18)--(08) node[midway, left] {};
\draw[->-]  (09)--(18) node[midway, left] {};
\draw[->-]  (19)--(09) node[midway, left] {};
\draw[->-]  (010)--(19) node[midway, left] {};
\draw[->-]  (110)--(010) node[midway, left] {};
\draw[->-]  (011)--(110) node[midway, left] {};
\draw[->-]  (111)--(011) node[midway, left] {};
\draw[->-]  (012)--(111) node[midway, left] {};
\draw[->-]  (112)--(012) node[midway, left] {};
\draw[->-]  (013)--(112) node[midway, left] {};
\draw[->-]  (113)--(013) node[midway, left] {};
\draw[->-]  (014)--(113) node[midway, left] {};
\draw[->-]  (114)--(014) node[midway, left] {};
\draw[->-]  (015)--(114) node[midway, left] {};
\draw[->-]  (115)--(015) node[midway, left] {};
\draw[->-]  (016)--(115) node[midway, left] {};
\draw[->-]  (116)--(016) node[midway, left] {};
\draw[->-]  (017)--(116) node[midway, left] {};
\draw[->-]  (117)--(017) node[midway, left] {};
\draw[->-]  (018)--(117) node[midway, left] {};
\draw[->-]  (118)--(018) node[midway, left] {};
\draw[->-]  (019)--(118) node[midway, left] {};
\draw[->-]  (119)--(019) node[midway, left] {};
\draw[->-]  (020)--(119) node[midway, left] {};
\draw[->-]  (120)--(020) node[midway, left] {};
\draw[->-]  (021)--(120) node[midway, left] {};
\draw[->-]  (121)--(021) node[midway, left] {};
\draw[->-]  (022)--(121) node[midway, left] {};
\draw[->-]  (122)--(022) node[midway, left] {};
\draw[->-]  (023)--(122) node[midway, left] {};
\draw[->-]  (123)--(023) node[midway, left] {};
\draw[-]  (123R)--(023) node[midway, left] {};
\draw[->-]  (123R)--(2.3,.7) node[midway, left] {};
\draw[->-]  (024)--(123) node[midway, left] {};
\draw[->-]  (124)--(024) node[midway, left] {};
\draw[->-]  (025)--(124) node[midway, left] {};
\draw[->-]  (125)--(025) node[midway, left] {};
\draw[->-]  (026)--(125) node[midway, left] {};
\draw[->-]  (126)--(026) node[midway, left] {};
\draw[->-]  (027)--(126) node[midway, left] {};
\draw[->-]  (127)--(027) node[midway, left] {};
\draw[->-]  (028)--(127) node[midway, left] {};
\draw[->-]  (128)--(028) node[midway, left] {};

\draw[->-]  (129)--(029) node[midway, left] {};
\draw[->-]  (030)--(129) node[midway, left] {};
\draw[->-]  (130)--(030) node[midway, left] {};
\draw[->-]  (031)--(130) node[midway, left] {};
\draw[->-]  (131)--(031) node[midway, left] {};
\draw[->-]  (032)--(131) node[midway, left] {};
\draw[->-]  (132)--(032) node[midway, left] {};
\draw[->-]  (033)--(132) node[midway, left] {};
\draw[->-]  (133)--(033) node[midway, left] {};
\draw[->-]  (034)--(133) node[midway, left] {};
\draw[->-]  (134)--(034) node[midway, left] {};
\draw[->-]  (035)--(134) node[midway, left] {};
\draw[->-]  (135)--(035) node[midway, left] {};
\draw[->-]  (036)--(135) node[midway, left] {};
\draw[->-]  (136)--(036) node[midway, left] {};
\draw[->-]  (037)--(136) node[midway, left] {};
\draw[->-]  (137)--(037) node[midway, left] {};
\draw[->-]  (038)--(137) node[midway, left] {};
\draw[->-]  (138)--(038) node[midway, left] {};
\draw[->-]  (039)--(138) node[midway, left] {};
\draw[->-]  (139)--(039) node[midway, left] {};
\draw[->-]  (040)--(139) node[midway, left] {};
\draw[->-]  (140)--(040) node[midway, left] {};
\draw[->-]  (041)--(140) node[midway, left] {};
\draw[->-]  (141)--(041) node[midway, left] {};
\draw[->-]  (042)--(141) node[midway, left] {};
\draw[->-]  (142)--(042) node[midway, left] {};
\draw[->-]  (043)--(142) node[midway, left] {};
\draw[->-]  (143)--(043) node[midway, left] {};
\draw[->-]  (044)--(143) node[midway, left] {};
\draw[->-]  (144)--(044) node[midway, left] {};
\draw[->-]  (045)--(144) node[midway, left] {};
\draw[->-]  (145)--(045) node[midway, left] {};
\draw[->-]  (046)--(145) node[midway, left] {};
\draw[->-]  (146)--(046) node[midway, left] {};
\draw[->-]  (047)--(146) node[midway, left] {};
\draw[->-]  (147)--(047) node[midway, left] {};
\draw[->-]  (048)--(147) node[midway, left] {};
\draw[->-]  (148)--(048) node[midway, left] {};
\draw[->-]  (049)--(148) node[midway, left] {};
\draw[->-]  (149)--(049) node[midway, left] {};
\draw[->-]  (050)--(149) node[midway, left] {};
\draw[->-]  (150)--(050) node[midway, left] {};
\draw[->-]  (051)--(150) node[midway, left] {};
\draw[->-]  (151)--(051) node[midway, left] {};
\draw[->-]  (052)--(151) node[midway, left] {};
\draw[->-]  (152)--(052) node[midway, left] {};

%Curved arrows
\draw[->-] (123R) .. controls (.5,1.25) and (-.75, 1.5)  .. (00);
\draw[->-] (052) .. controls (5.3,1.5) and (2.9,1.5)  .. (128);
\draw[->-] (152) .. controls (5.5,-.2) and (5.5,-.3)  .. (029);

%Vertex names
\node[anchor =  north ]  at (00) {\small{0}};
\node[anchor =  north ]  at (01) {\small{1}};
\node[anchor =  north ]  at (02) {\small{2}};
\node[anchor =  north ]  at (.4, -.04) {\ldots};
\node[anchor =  north ]  at (2.1, -.04) {\ldots};
\node[anchor =  north ]  at (023) {\small{23}};
\node[anchor =  north ]  at (029) {\small{29}};
\node[anchor =  south ]  at (151) {\small{52}};

\node[anchor = north]  at (0,-.15) {$e_2$};
\node[anchor = north]  at (2.3,-.15) {$\tau$};
\node[anchor = north]  at (2.9,-.15) {$e_1$};

\node[anchor = north]  at (5.15,1.3) {$\sigma$};

%\draw[draw=red, thick]  (O52)--(O51) node[midway, above] {\large{$\sigma$}};
%\draw[red, thick]  (O52)--(O29) node[midway, left] {};
%\draw[->-]  (O52)--(O28) node[midway, left] {};

%\node[anchor =  west ]  at (O29) {\large{$e_1$}};
%\node[anchor =  west ]  at (O0) {\large{$e_2$}};

\end{tikzpicture}}
$$
    \caption{\label{fig:fullhasse} The complete Hasse diagram for the order complex $S_K$ from Figure \ref{fig:ordergradient}.}
\end{figure}

\begin{figure}
$$
\begin{tikzpicture}[scale = 1,decoration={markings,mark=at position 0.7 with {\arrow{Stealth}}},]

%Order complex verticies row 0
\node[inner sep=1.5pt, circle,  fill=red] (0) at (0,0) [draw] {};
\node[inner sep=1.5pt, circle, fill=black] (1) at (1,0) [draw] {};
\node[inner sep=1.5pt, circle, fill=black] (23) at (3,0) [draw] {};
\node[inner sep=1.5pt, circle, fill=black] (28) at (5,0) [draw] {};
\node[inner sep=1.5pt, circle, fill=red] (29) at (6,0) [draw] {};
\node[inner sep=1.5pt, circle, fill=black] (51) at (8,0) [draw] {};
\node[inner sep=1.5pt, circle, fill=black] (511) at (9,0) [draw] {};

%Order complex verticies row 1
\node[inner sep=1.5pt, circle, fill=black] (5) at (.5,2) [draw] {};
\node[inner sep=1.5pt, circle, fill=black] (225) at (2.2,2) [draw] {};
\node[inner sep=1.5pt, circle, fill=red] (231) at (3,2) [draw] {};
\node[inner sep=1.5pt, circle, fill=black] (235) at (3.8,2) [draw] {};
\node[inner sep=1.5pt, circle, fill=black] (285) at (5.5,2) [draw] {};
\node[inner sep=1.5pt, circle, fill=black] (295) at (6.5,2) [draw] {};
\node[inner sep=1.5pt, circle, fill=red] (52) at (8.5,2) [draw] {};
\node[inner sep=1.5pt, circle, fill=red] (515) at (10,2) [draw] {};

%Order complex edges
\draw[->-]  (1)--(5) node[midway, left] {};
\draw[->-]  (5)--(0) node[midway, left] {};
\draw[->-]  (23)--(225) node[midway, left] {};
\draw[->-]  (231)--(23) node[midway, left] {};
\draw[->-]  (235)--(23) node[midway, left] {};
\draw[->-]  (231)--(0) node[midway, left] {};
\draw[->-]  (285)--(28) node[midway, left] {};
\draw[->-]  (295)--(29) node[midway, left] {};
\draw[->-]  (511)--(285) node[midway, left] {};
\draw[->-]  (515)--(29) node[midway, left] {};
\draw[->-]  (515)--(511) node[midway, left] {};
\draw[->-]  (52)--(511) node[midway, left] {};
\draw[->-]  (52)--(51) node[midway, left] {};

%Vertex names
\node[anchor =  north, red ]  at (0) {0};
\node[anchor =  north ]  at (1) {1};
\node[anchor =  north ]  at (23) {23};
\node[anchor =  north ]  at (28) {28};
\node[anchor =  north, red ]  at (29) {29};
\node[anchor =  north ]  at (51) {51};
\node[anchor =  north, fgreen ]  at (511) {51.1};

\node[anchor =  south, fgreen ]  at (5) {0.5};
\node[anchor =  south, fgreen ]  at (225) {22.5};
\node[anchor =  south, fgreen ]  at (231) {23.1};
\node[anchor =  south, fgreen ]  at (235) {23.5};
\node[anchor =  south, fgreen ]  at (285) {28.5};
\node[anchor =  south, fgreen ]  at (295) {29.5};
\node[anchor =  south, red ]  at (52) {52};
\node[anchor =  south, fgreen ]  at (515) {51.5};

\node[anchor = north]  at (0,-.3) {$e_2$};
\node[anchor = north]  at (6,-.3) {$e_1$};
\node[anchor = north]  at (3,-.3) {\large{$\tau$}};
\node[anchor = north]  at (8.5,2.7) {\large{$\sigma$}};

\node[anchor = north]  at (2,0) {\large{$\cdots$}};
\node[anchor = north]  at (4,0) {\large{$\cdots$}};
\node[anchor = north]  at (7,0) {\large{$\cdots$}};
\node[anchor = south]  at (1.35,2) {\large{$\cdots$}};

\end{tikzpicture}
$$
    \caption{\label{fig:relevant} The relevant portions of the Hasse diagram from Figure \ref{fig:fullhasse}.}
\end{figure}
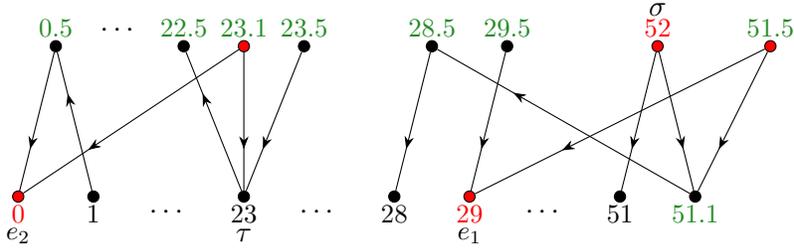

If $a$ is a real number, we have the {\em level subcomplex} of $S_K$:
$$S_K(a) = \bigcup_{\substack{F(\sigma)\le a\\ \sigma\in S_K}}\bigcup_{\tau<\sigma} \tau.$$ We also have the corresponding level subcomplexes of $\sd(X)$, which we denote by $\sd(X)(a)$. Observe that $S_K(a)\subseteq \sd(X)(a)$. Let $\sd(K)=\sd(X)\setminus \sd(T)$ and observe that each $S_K(a)$ lies in $\sd(K)$.   Now define a hypergraph $K(a)$ by $$K(a) = \sd(K)\cap \sd(X)(a).$$ Note that $K(a) = \sd(X)(a)\setminus \sd(T)(a)$.

The following result is the sublevel complex version of Proposition \ref{prop:strongdef}.

\begin{proposition}
    $|K(a)|$ deformation retracts to $|S_K(a)|$.
\end{proposition}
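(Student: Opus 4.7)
The plan is to show that the strong deformation retract of Proposition~\ref{prop:strongdef}, applied to $K$ itself, restricts to a strong deformation retract of $|K(a)|$ onto $|S_K(a)|$. Two ingredients are needed: the set-theoretic containment $|S_K(a)| \subseteq |K(a)|$, and a locality property of the retraction, namely that each point moves only within the closed simplex of $\sd(X)$ containing it.

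For the containment, observe that $S_K(a) = S_K \cap \sd(X)(a)$ as subcomplexes of $\sd(X)$, and that $S_K$ is disjoint from $\sd(T)$; hence $|S_K(a)| \subseteq |\sd(X)(a)| \setminus |\sd(T)(a)| = |K(a)|$, and in fact $|S_K(a)| = |S_K| \cap |K(a)|$.

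For the locality, I would either invoke the explicit form of the retraction of \cite{JostZhang23} or construct it directly. A point $p \in |K|$ lies in the interior of a unique open simplex $\mathrm{int}(\eta)$ of $\sd(X)$, and this $\eta$ corresponds to a chain $\gamma_1 \subsetneq \cdots \subsetneq \gamma_j$ of simplices of $X$ with $\gamma_j \in K$ (since $p \notin |T|$). Writing $p = \sum_i t_i b_{\gamma_i}$ in barycentric coordinates, with $b_{\gamma_i}$ the barycenter of $\gamma_i$, and letting $B = \{i : \gamma_i \in K\}$ with $T_B = \sum_{i \in B} t_i > 0$, I set
\[
H(p, s) = (1-s)\, p + s \sum_{i \in B} \frac{t_i}{T_B}\, b_{\gamma_i}.
\]
The trajectory lies in $\bar\eta$ for all $s$, the endpoint at $s = 1$ lies in the open subsimplex spanned by $\{b_{\gamma_i}\}_{i \in B}$ (a simplex of $S_K$), and $H$ fixes $p$ whenever $B = \{1,\dots,j\}$ (that is, when $p \in |S_K|$). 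Continuity across open simplices of $\sd(X)$ sharing a face follows because the formula is linear in the barycentric coordinates of the common face and agrees there from both sides.

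To conclude, if $p \in |K(a)| = |K| \cap |\sd(X)(a)|$, then $\eta \in \sd(X)(a)$, so $\bar\eta \subseteq |\sd(X)(a)|$, and the trajectory remains in $|\sd(X)(a)| \cap |K| = |K(a)|$; its endpoint at $s = 1$ lies in $|S_K| \cap |K(a)| = |S_K(a)|$. Thus $H$ restricts to a strong deformation retract of $|K(a)|$ onto $|S_K(a)|$. The main obstacle is the verification of continuity of $H$ across common faces of open simplices of $\sd(X)$, which is straightforward because the definition is given entirely by barycentric coordinates, but it is the one routine computation the proof actually requires.
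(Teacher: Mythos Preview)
Your argument has a genuine gap: the identity $S_K(a) = S_K \cap \sd(X)(a)$ (and hence $|S_K|\cap |K(a)| = |S_K(a)|$) is not true in general, and your retraction $H$ can therefore land outside $|S_K(a)|$.

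Here is what goes wrong. Take a simplex $\tau\in S_K$ that is paired \emph{upward} in $V_X'$ with a simplex $\mu>\tau$ satisfying $\mu\notin S_K$; such pairs are exactly what produce the ``extra'' critical cells of $W$ in Definition~\ref{def:inducedgrad}(b) and Proposition~\ref{prop: gradient on S_K}(1), so they certainly occur. Then $F(\mu)<F(\tau)$, and for any $a$ with $F(\mu)\le a<F(\tau)$ one has $\tau\in \sd(X)(a)$ (witnessed by $\mu$), so $\tau\in S_K\cap \sd(X)(a)$. But $\tau\notin S_K(a)$: every coface $\sigma>\tau$ other than $\mu$ has $F(\sigma)>F(\tau)>a$, and $\mu\notin S_K$, so no $\sigma\in S_K$ with $\tau\le\sigma$ satisfies $F(\sigma)\le a$. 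Concretely, $\mu$ is obtained from the chain $\tau=(\gamma_1\subset\cdots\subset\gamma_j)$ by inserting some $\gamma_0\in T$ below $\gamma_1$; then $\mu\in\sd(K)$ (its top element is still $\gamma_j\in K$), so $\mu\in K(a)$, and for any $p\in\mathrm{int}(\mu)$ your formula gives $H(p,1)\in\mathrm{int}(\tau)\not\subseteq |S_K(a)|$. Even more simply, points of $\mathrm{int}(\tau)$ lie in $|K(a)|$ and are fixed by $H$, yet do not lie in $|S_K(a)|$.

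What your map actually produces is a strong deformation retraction of $|K(a)|$ onto $|S_K\cap\sd(X)(a)|$, which is the paper's Case~1. To get from there down to $|S_K(a)|$ you still need to remove precisely those $\tau$ described above, and this is done by collapsing the pair $\{\tau<\mu\}$ --- i.e.\ the paper's Case~2. So the Morse-theoretic information in $V_X'$ is not bypassed; it is needed to finish the argument.
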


\begin{proof}
    Suppose $\sigma$ is a cell in $K(a)\setminus S_K(a)$. Then there are two cases.
    \begin{enumerate}
        \item $F(\sigma)\le a$. Since $\sigma\notin S_K(a)$, there is a face of $\sigma$ that does not lie in $\sd(K)$, and therefore $\sigma$ retracts to its boundary in $K(a)$.
        \item $F(\sigma)>a$, but there exists a coface $\tau>\sigma$ with $F(\tau)\le a$. This means that the pair $\{\sigma,\tau\}\in V_X'$. This implies that any other $\tau'>\sigma$ has $F(\tau')>a$ and any face $\eta<\sigma$ has $F(\eta)<F(\sigma)$. Thus $\sigma$ is a free face of $\tau$ in $\sd(X)(a)$ and we can therefore collapse the pair $\{\sigma,\tau\}$.
    \end{enumerate}
    In either case, we see that the cells not in $S_K(a)$ can be deformed away in $|K(a)|$  leaving $|S_K(a)|$ fixed. Proceeding by downward induction from the top dimensional cells in $K(a)$, the result follows.
\end{proof}

\begin{figure}
$$
\begin{tikzpicture}[scale = .46,decoration={markings,mark=at position 0.7 with {\arrow{Stealth}}},]

\filldraw[fill=black!30, draw=none] (0,0)--(15,0)--(15,15)--(105/22,225/22)--(7.5,7.5)--cycle;

\draw[thick] (0,0) -- (15,0);
\draw[thick] (0,0) -- (0,15);
\draw[ultra thick, red] (15,0) -- (15,15);
\draw[thick] (0,15) -- (15,15);

\draw[thick] (0,8) -- (15,15);
\draw[thick] (0,15) -- (15,0);
\draw[thick] (0,0) -- (7,15);
\draw[thick] (0,0) -- (15,15);
\draw[thick] (0,0) -- (15,8);
\draw[thick] (7,0) -- (15,15);

\draw[ultra thick, magenta] (105/22,225/22) -- (7.5,7.5) node[black,midway,sloped,above] {{\tiny 22.5}};
\draw[ultra thick, magenta] (7.5,7.5) -- (225/23,120/23) node[black,midway,sloped,above] {{\tiny 21.5}};
\draw[ultra thick, magenta] (225/23,120/23) -- (7,0) node[black,midway,sloped,below] {{\tiny 20.5}};
\draw[ultra thick, green] (105/22,225/22) -- (7,15) node[black,midway,sloped,above] {{\tiny 23.1}};
\draw[thick, black] (0,8) -- (105/22,225/22) node[midway,sloped,above] {{\tiny 23.25}};
\draw[thick] (105/22,225/22) -- (15,15) node[midway,sloped,above] {{\tiny 23.05}};
\draw[thick] (0,15) -- (105/22,225/22) node[midway,sloped,above] {{\tiny 23.3}};
\draw[thick] (0,0) -- (105/22,225/22) node[midway,sloped,below] {{\tiny 23.34}};
\draw[thick] (0,0) -- (7.5,7.5) node[midway,sloped,below] {{\tiny 22.6}};
\draw[thick] (7.5,7.5) -- (15,15) node[midway,sloped,below] {{\tiny 22.7}};

\filldraw[black] (105/22,225/22) circle (1pt) node[anchor=south]{{\tiny 23}};
\filldraw[black] (0,8) circle (1pt) node[anchor=east]{{\tiny 24}};
\filldraw[black] (7.5,7.5) circle (1pt) node[anchor=south]{{\tiny 22}};
\filldraw[black] (225/23,120/23) circle (1pt) node[anchor=south]{{\tiny 21}};
\filldraw[black] (7,0) circle (1pt) node[anchor=north]{{\tiny 20}};
\filldraw[black] (7,15) circle (1pt) node[anchor=south]{{\tiny 0}};

\filldraw[fill=none, draw=none] (0,8)--(0,15)--(105/22,225/22)--cycle node at (1.6,11.2) {{\tiny 23.4}};

\filldraw[fill=none, draw=none] (0,15)--(105/22,225/22)-- (7,15)--cycle node at (4,13.2) {{\tiny 23.2}};

\filldraw[fill=none, draw=none] (105/22,225/22)-- (7,15)-- (15,15) -- cycle node at (8,13.2) {{\tiny 23.09}};

\filldraw[fill=none, draw=none] (105/22,225/22)-- (15,15) -- (7.5,7.5) -- cycle node at (8,10.2) {{\tiny 22.9}};

\filldraw[fill=none, draw=none] (105/22,225/22)-- (0,0) -- (7.5,7.5) -- cycle node at (5,7.5) {{\tiny 23.33}};

\filldraw[fill=none, draw=none] (0,0) -- (7.5,7.5) -- (225/23,120/23) -- cycle node at (7.5,5.5) {{\tiny 22.4}};

\filldraw[fill=none, draw=none] (0,0) -- (0,8) -- (105/22,225/22) -- cycle node at (1.5,5.8) {{\tiny 23.35}};

\draw[thick, ->, blue] (259/44,3885/308) -- (281/44,3731/308);
\draw[thick, ->, blue] (0,11.5) -- (0.71,11.5);
\draw[thick, ->, blue] (0,4) -- (0.71,4);
\draw[thick, ->, blue] (0,8) -- (0.6,4347/525);
\draw[thick, ->, blue] (3.2,11.8) -- (3.7,12.3);
\draw[thick, ->, blue] (105/22,225/22) -- (116/22,214/22);
\draw[thick, ->, blue] (7.5,7.5) -- (8,7);
\draw[thick, ->, blue] (225/23,120/23) -- (427/46,1575/368);
\draw[thick, ->, blue] (3.2,48/7) -- (3.7,89/14);
\draw[thick, ->, blue] (4.5,4.5) -- (5,4);
\draw[thick, ->, blue] (9.5,9.5) -- (10,9);
\draw[thick, ->, blue] (7.5,805/70) -- (8,770/70);
\draw[thick, ->, blue] (5.5,88/30) -- (6,73/30);
\draw[thick, ->, blue] (13,104/15) -- (13,6.12);
\draw[thick, ->, blue] (12,3) -- (11.5,2.5);
\draw[thick, ->, blue] (12,75/8) -- (12.5,71/8);

\end{tikzpicture}
$$
\caption{\label{fig:sublevel} A portion of $K(23)$.}
\end{figure}
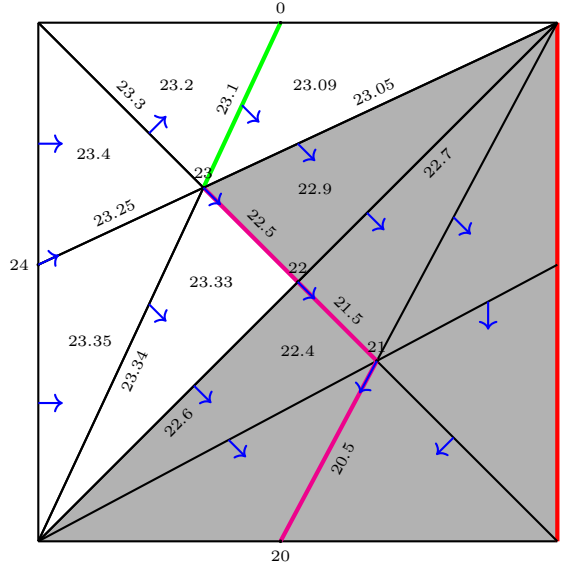

\begin{example}
    In Figure \ref{fig:sublevel} we show a portion of the open complex $K(23)$ from our running example. The level subcomplex $S_K(23)$ is shown in magenta and $K(23)$ is the shaded portion. Note that it contains the edge $23.05$. The edge $23.1$ (top portion of the diagram, in green) is paired in $V_X'$ but it is critical in $W$ (its endpoints have values $23$ and $0$). The retraction $|K(23)|\to |S_K(23)|$ is evident.
\end{example}

We now employ the results of discrete Morse theory to obtain the following, which is the analogue of the classical theorem in smooth Morse theory that sublevel sets of a Morse function are diffeomorphic if there are no critical values between them.

\begin{theorem}\label{thm:sublevel}
    If the interval $[a,b]$ does not contain any critical values of $F\colon S_K\to\zr$, then $|K(b)|$ deformation retracts to $|K(a)|$. If $[a,b]$ contains a single critical value $c$ of $F$, with corresponding critical cell $\sigma^{i}$, then $|K(b)|\simeq |K(a)|\cup \sigma^{i}$. 
\end{theorem}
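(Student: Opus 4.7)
The plan is to reduce the statement about the hypergraph level sets $|K(a)|\subseteq |K(b)|$ to an analogous statement about the simplicial level subcomplexes $S_K(a)\subseteq S_K(b)$ of the honest simplicial complex $S_K$, and then apply Forman's classical discrete Morse theory to the function $F\colon S_K\to\zr$ with gradient $W$.

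First I would observe that the preceding proposition gives deformation retractions $|K(b)|\searrow |S_K(b)|$ and $|K(a)|\searrow |S_K(a)|$. Moreover, the argument in that proposition is local: every cell of $K(a)$ that is collapsed away lies in $\sd(X)(a)$, so the same collapsing moves carried out inside $|K(b)|$ restrict to the collapses producing $|S_K(a)|$ inside $|K(a)|$. In particular, the retraction $|K(b)|\to |S_K(b)|$ restricts to the retraction $|K(a)|\to |S_K(a)|$, so it suffices to prove the two homotopical statements about the pair $S_K(a)\subseteq S_K(b)$.

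Now $S_K$ is an honest simplicial complex, $W$ is a discrete gradient on it (Definition \ref{def:inducedgrad}), and by Proposition \ref{prop:extend} plus the subsequent remark, $F$ restricts to a discrete Morse function on $S_K$ whose associated gradient is exactly $W$. Thus the hypotheses of Forman's structure theorem \cite[Theorems 3.3, 3.4]{Forman1998} are met. In the no-critical-value case, $F^{-1}([a,b])\cap S_K$ consists entirely of regular pairs of $W$, and these pairs provide an explicit sequence of elementary collapses witnessing $|S_K(b)|\searrow |S_K(a)|$. Combined with the retractions from the first paragraph, one obtains $|K(b)|\searrow |K(a)|$.

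In the single critical value case, let $\sigma^i$ be the unique critical $i$-cell of $W$ in $F^{-1}([a,b])$. The other cells in this slab again form regular $W$-pairs, so after collapsing them we have $|S_K(b)|\simeq |S_K(a)|\cup_{\partial\sigma^i}\sigma^i$ in the usual Forman fashion. Stacking the retractions $|K(b)|\simeq |S_K(b)|$ and $|K(a)|\simeq |S_K(a)|$ with this attachment gives $|K(b)|\simeq |K(a)|\cup\sigma^i$. The main technical point to pin down carefully is the compatibility in the first paragraph: one must verify that the pairs of $V_X'$ used to collapse $|K(b)|$ onto $|S_K(b)|$ remain free pairs when restricted to the sublevel slab, and that cells of $K(b)\setminus K(a)$ which are matched with cells of higher $F$-value outside the slab do not obstruct the collapse. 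This follows from the dichotomy in the proposition's proof (either $F(\sigma)\le a$, in which case $\sigma$ retracts to its boundary using faces missing from $\sd(K)$, or $\sigma$ is paired with a coface of larger $F$-value that is still inside the same slab because $F$ increases along the regular pair), so no interaction with cells outside $[a,b]$ is created.
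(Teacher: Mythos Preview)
Your proposal is correct and follows essentially the same route as the paper: reduce to the simplicial level subcomplexes $S_K(a)\subseteq S_K(b)$ via the preceding proposition, then invoke Forman's structure theorems for the discrete Morse function $F$ on $S_K$. The paper's own proof is terser, packaging the compatibility you spell out into a commutative square of inclusions and retractions, but the substance is the same; if anything, your explicit discussion of why the collapse of $|K(b)|$ onto $|S_K(b)|$ restricts to that of $|K(a)|$ onto $|S_K(a)|$ fills in a step the paper leaves implicit.
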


\begin{proof}
    Suppose $[a,b]$ does not contain any critical values of $F$. Consider the commutative diagram
    $$\begin{tikzcd}%[row sep=small]
        \textrm{$|S_K(a)|$} \arrow[r] \arrow[d,"\simeq"] & \textrm{$|S_K(b)|$}\arrow[d,"\simeq"] \\
        \textrm{$|K(a)|$} \arrow[r] & \textrm{$|K(b)|$}
    \end{tikzcd}$$
    There is a deformation retraction $r:|S_K(b)|\to |S_K(a)|$ (in fact, a simplicial collapse $S_K(b)\searrow S_K(a)$) and therefore we conclude that $|K(b)|$ retracts to $|K(a)|$ leaving the latter fixed.

    For the second statement, note that if $\sigma$ is the critical $i$-cell with $F(\sigma)=c$, then $$S_K(b) = S_K(a)\cup_{\partial\sigma}\sigma.$$ We therefore conclude that $|K(b)|\simeq |K(a)|\cup \sigma$.
\end{proof}

\begin{corollary}\label{cor:htpytype}
    $|K|$ has the homotopy type of a cell complex with one cell of dimension $k$ for each critical cell of dimension $k$ in $W$. This contains a cell complex with one cell of dimension $i$ for each critical cell of height $i$ in $V_K$.
\end{corollary}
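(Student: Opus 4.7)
The plan is to combine Theorem \ref{thm:sublevel} with an inductive exhaustion by level subcomplexes. Since $F\colon S_K\to\zr$ is injective, enumerate its critical values as $a_1<a_2<\cdots<a_m$ and choose interleaving real numbers $b_0<a_1<b_1<\cdots<a_m<b_m$ so that each interval $[b_{j-1},b_j]$ contains the single critical value $a_j$. For $a<a_1$ we have $|K(a)|=\emptyset$, while for $a\ge b_m$ we have $K(a)=\sd(K)$, whose realization equals $|K|$ and deformation retracts to $|S_K|$ by Proposition \ref{prop:strongdef}. Applying Theorem \ref{thm:sublevel} to each consecutive pair $(b_{j-1},b_j)$ and concatenating the resulting homotopy equivalences yields
\begin{equation*}
    |K|\simeq |K(b_m)| \simeq \bigl(\cdots\bigl(\sigma_1\cup\sigma_2\bigr)\cup\cdots\bigr)\cup\sigma_m,
\end{equation*}
where $\sigma_j\in S_K$ is the unique critical cell of $W$ with $F(\sigma_j)=a_j$ and each $\cup\sigma_j$ is the attaching of a cell of dimension $\dim\sigma_j$. (Note that $\sigma_1$ must be a $0$-cell, since the global minimum of a discrete Morse function on a simplicial complex is always attained at a critical vertex.) This produces a CW complex with one $k$-cell for each critical $k$-cell of $W$ and proves the first statement.

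For the containment, invoke Proposition \ref{prop: gradient on S_K}: each critical $k$-cell $\sigma$ of $V_K$ of height $i$ gives rise to a distinguished critical $i$-cell $\sigma'$ of $W$ in the interior of $\sigma$, and this correspondence is injective. Let $\mathcal{C}\subseteq\{\sigma_1,\ldots,\sigma_m\}$ be the resulting collection and let $Y_{\mathcal{C}}$ be the union of the corresponding cells in the CW complex above. To verify that $Y_{\mathcal{C}}$ is itself a subcomplex with one $i$-cell for each critical $V_K$-cell of height $i$, one appeals to Forman's description of the attaching maps in this Morse-theoretic CW structure: the attaching sphere of $\sigma'$ is swept out by $W$-paths from codimension-$1$ faces of $\sigma'$ to critical $(i-1)$-cells of $W$, and what must be shown is that every such path lands in another member of $\mathcal{C}$.

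The main obstacle is precisely this path-matching step. The ``extra'' critical cells of $W$ outside $\mathcal{C}$ arise through Definition \ref{def:inducedgrad}(b) as simplices of $S_K$ paired in $V_X'$ with simplices outside $S_K$, and they typically appear where a $V_K$-gradient path splits (cf.\ Example \ref{ex:runningorder}). Since the modification of Zhukova's construction that yielded $V_X'$ was engineered so that the steepest branch of each regular pair is channelled through $S_K$, one expects that the $W$-flow inside the interior of any $V_K$-critical simplex $\sigma$ descends along the ambient $V_K$-gradient and terminates in the interior of another $V_K$-critical simplex of height one less. Formalising this by a cell-by-cell comparison of $W$-paths starting inside $\sigma'$ with the $V_K$-paths leaving $\sigma$, and ruling out escapes into extra critical cells of $W$, is the technical heart of the argument.
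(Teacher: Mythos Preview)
The paper gives no explicit proof of this corollary; it is stated as an immediate consequence of what precedes it. Your treatment of the first sentence is fine, but note that it is even shorter than you make it: $W$ is an honest discrete gradient on the honest simplicial complex $S_K$, so Forman's classical structure theorem already gives $|S_K|$ the homotopy type of a CW complex with one $k$-cell per critical $k$-cell of $W$, and Proposition~\ref{prop:strongdef} then transfers this to $|K|$. There is no need to pass through Theorem~\ref{thm:sublevel} and the spaces $K(a)$ (and doing so forces you to worry about the base case $|K(a)|=\emptyset$ for small $a$, which is not quite as immediate as you assert, since $F$ on $\sd(X)$ has many critical values in $\sd(T)$ below $a_1$).

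For the second sentence you have over-read the statement, and this is where your argument runs into trouble. The paper's phrase ``contains a cell complex with one cell of dimension $i$ for each critical cell of height $i$ in $V_K$'' is meant to record nothing more than Proposition~\ref{prop: gradient on S_K}(2): among the cells of the Morse CW model there is an injectively specified subset, one $i$-cell for each $V_K$-critical cell of height $i$. The paper does not assert, and does not prove, that this subset is closed under the attaching maps, i.e.\ that it forms a CW \emph{subcomplex} in the strict sense. Your attempt to establish the stronger claim---tracking $W$-paths out of $\sigma'$ and showing they terminate only at other members of $\mathcal{C}$---is, as you yourself concede, incomplete; and in fact the running example already suggests it can fail, since gradient paths in $W$ emanating from the edge corresponding to $\sigma$ may well meet the ``extra'' critical edges created at the splitting near $\tau$ before reaching the vertices $e_1,e_2$. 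So the difficulty you identify is real, but it is a difficulty with a statement the paper does not make. Drop the subcomplex interpretation and the second sentence follows in one line from Proposition~\ref{prop: gradient on S_K}.
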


\begin{example}
    We conclude with a simple, but pathological, example to illustrate the usefulness of Corollary \ref{cor:htpytype}. Consider the $2$-simplex $X$ on the left side of Figure \ref{fig:pathological}. The subcomplex $T$ consists of a single vertex (in red, lower left) and the open complex $K$ is $X\setminus T$. A discrete gradient is shown on $K$. Note that there are {\em no critical cells}. However, we know that $|K|$ is not the empty set, and indeed it is contractible to a point. The order complex $S_K$ is shown on the right side of Figure \ref{fig:pathological}, along with the induced gradient $W$. Note that there are three critical vertices and two critical edges. This captures the topology of $|K|$. 
\end{example}

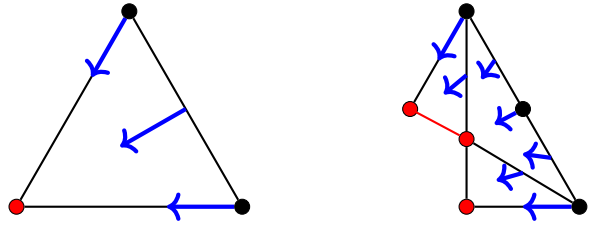
\begin{figure}
  \begin{center}  
\begin{tikzpicture}[scale = 1,decoration={markings,mark=at position 0.6 with {\arrow{triangle 60}}},]

%vertices
\node[inner sep=2pt, circle, fill=red] (0) at (0,0) [draw] {};
\node[inner sep=2pt, circle, fill=black] (1) at (3,0) [draw] {};
\node[inner sep=2pt, circle, fill=black] (2) at (1.5,2.6) [draw] {};

%edges
\draw[thick, draw = black]  (0)--(1) node[midway, left] {};
\draw[thick, draw = black]  (0)--(2) node[midway, left] {};
\draw[thick,draw = black] (2)--(1) node[midway, right] {};

\draw[ultra thick, ->, blue] (1)--(2,0);
\draw[ultra thick, ->, blue] (2)--(1,1.73);
\draw[ultra thick, ->, blue] (4.5/2,2.6/2)--(1.38,0.8);

\end{tikzpicture} 
\hskip .75in
\begin{tikzpicture}[scale = 1,decoration={markings,mark=at position 0.6 with {\arrow{triangle 60}}},]

%vertices
%\node[inner sep=2pt, circle, fill=red] (0) at (0,0) [draw] {};
\node[inner sep=2pt, circle, fill=black] (1) at (3,0) [draw] {};
\node[inner sep=2pt, circle, fill=black] (2) at (1.5,2.6) [draw] {};
\node[inner sep=2pt, circle, fill=red] (3) at (1.5,.9) [draw] {};
\node[inner sep=2pt, circle, fill=black] (4) at (4.5/2,2.6/2) [draw] {};
\node[inner sep=2pt, circle, fill=red] (5) at (0.75,1.3) [draw] {};
\node[inner sep=2pt, circle, fill=red] (6) at (1.5,0) [draw] {};

%edges
\draw[thick, draw = black]  (6)--(1) node[midway, left] {};
\draw[thick, draw = black]  (5)--(2) node[midway, left] {};
\draw[thick, draw = black]  (4)--(3) node[midway, left] {};
\draw[thick, draw = black]  (1)--(3) node[midway, right] {};
\draw[thick, draw = black]  (2)--(3) node[midway, right] {};
\draw[thick,draw = black] (2)--(1) node[midway, right] {} ;
\draw[thick,draw = red] (6)--(3) node[midway, right] {};
\draw[thick,draw = red] (1.5/2,2.6/2)--(3) node[midway, right] {};
%\draw[thick,draw = red] (1.5,0)--(3) node[midway, right] {};
%draw[thick,draw = black] (4.5/2,2.6/2)--(3) node[midway, above] {};

\draw[thick, ->, blue] (4)--(1.875,1.1) node[]{};
\draw[thick, ->, blue] (1)--(2.25,0) node[]{};
\draw[thick, ->, blue] (2)--(1.125,1.95) node[]{};
\draw[thick, ->, blue] (1.875,1.95)--(1.7,1.7) node[]{};
\draw[thick, ->, blue] (2.625,0.65)--(2.25,0.70) node[]{};
\draw[thick, ->, blue] (2.25,0.45)--(1.9,0.35) node[]{};
\draw[thick, ->, blue] (1.5,1.75)--(1.2,1.5) node[]{};

\end{tikzpicture}
\end{center}
    \caption{\label{fig:pathological} A gradient vector field with no critical cells (left), and the associated order complex (right).}
\end{figure}

\subsection*{Acknowledgments} We are grateful to an anonymous referee for many helpful suggestions.

\bibliographystyle{amsplain}
\bibliography{openDMT}

\appendix
\section{\label{apdx} Pseudocode for algorithms}

Algorithm \ref{alg: main} describes the process for generating a discrete gradient on the order complex $S_K$. 

\begin{definition}\label{def:maxintlabel}
Let $X$ be a simplicial complex and $\sd(X)$ the barycentric subdivision of $X$. Let $\gamma \in \sd(X)$ correspond to the chain $\sigma_0\subseteq \sigma_1\subseteq \cdots \subseteq \sigma_k$. The \textbf{maximal element} of $\gamma$ is $\max(\gamma):= \sigma_k$.  For any $\sigma \in X$, the \textbf{interior simplices} of $\sigma$ in $\sd(X)$ are
$$
\sd(\sigma)^{\circ}:=\{\gamma \in \sd(X): \max(\gamma)=\sigma\}.
$$
The \textbf{label} of $\gamma$ is the ordered partition 
$$
\lambda(\gamma)=(I_1 I_2\ldots I_{k+1})
$$
where $I_1=\sigma_0$ and $I_j=\sigma_{j-1}\setminus \sigma_{j-2}$ for $j\geq 2$.
\end{definition}

\begin{definition}\label{def:lcs}
The \textbf{longest common suffix} of two labels is the number of consecutive positions, starting from the rightmost position, such that the sets in the label at each position are equal.
\end{definition}

The following is the pseudocode for the algorithm in Section \ref{sec: structure theorem}. The routine \texttt{Critical} is described in Algorithm \ref{alg: critical} and the routine \texttt{RegularPair} is described in \ref{alg: regularpair}.

\begin{algorithm}
  \caption{Main Open discrete Morse function algorithm}\label{alg: main}
  \begin{algorithmic}
\State \verb"Input:" Open simplicial complex $K=X\setminus T$ with gradient $V_K$ 
\State \verb"Output:" Discrete Morse function $F\colon S_K\to \mathbb{R}$ and gradient vector field $W=V_F$
\end{algorithmic}
\begin{algorithmic}[1]
\State Extend $V_K$ to $V_X$ by starting with $V_K$ and making everything in $T$ critical
\State Choose an injective $f\colon X \to \mathbb{R}$ satisfying $V_f=V_X$
\ForAll {$\sigma \in K$} % \textbf{do}
\State $\mathrm{ht}(\sigma)\leftarrow $ length of longest chain in $K$ ending at $\sigma$
\EndFor
\State Initialize $V'_X\leftarrow \emptyset$
\ForAll { $\sigma \in X$, in order of increasing dimension} 
\If {$\sigma$ is critical in $V_X$} 
\State Apply \verb"Critical"$(\sigma, f)$ and add resulting pairs to $V'_X$
\ElsIf { $\sigma=\beta$ is part of regular pair $(\alpha, \beta)\in V_X$} 
\State Apply \verb"RegularPair"$(\alpha, \beta, V_K, f)$ and add resulting pairs to $V'_X$ 
\ElsIf  { $\sigma=\alpha$ is part of regular pair $(\alpha, \beta)\in V_X$} 
\State skip
\EndIf
\EndFor
\State $W\leftarrow \{(\gamma, \delta)\in V'_X: \gamma, \delta \in S_K\}$
\State $\gamma \in S_K$ is critical in $W$ if it
 is critical in $V'_X$ or $\gamma$'s partner in $V'_X$ is not in $S_K$
\State Extend $f$ to $F$ on $\mathrm{sd}(X)$ compatible with $V'_X$, then restrict to get $F:S_K\to\zr$.
  \end{algorithmic}
\end{algorithm}

We now present pseudocode for Zhukova's algorithm \cite{Zhukova2017}, along with our modifications as described in Section \ref{sec: structure theorem}.

\begin{definition}\label{def:amenable}
Let $\sigma$ be an $n$-simplex of height $i$ in $K=X\setminus T$. An $n$-cell $\alpha\in\textrm{sd}(\sigma)^\circ$ is called \textbf{amenable} if it has an $i$-dimensional face $\tau$ lying in $S_K$. We may then write $\tau=b\ast\tau'$, where $b$ is the barycenter of $\sigma$ and $\tau'$ is an $(i-1)$-dimensional simplex in the subdivision of a simplex $\sigma'<\sigma$ of height $i-1$. 
\end{definition}

In Figure \ref{fig:guarantee} the $2$-simplex $\tau'$ is amenable, while the cell $\tau$ is not. The $2$-simplex sharing edge $f$ with $\tau'$ is also amenable.

\begin{definition}\label{def:admissible}
Suppose we have a discrete gradient $V_X$ with a compatible injective discrete Morse function $f:X\to\zr$. Suppose the $n$-simplex $\sigma$ in $K=X\setminus T$ has height $i$. We call an ordering of the vertices of $\sigma$ \textbf{admissible} if the  $n$-cell $\alpha\in\text{sd}(\sigma)^\circ$ with label  $\lambda(\alpha)=(\{n+1\}\{n\}\ldots \{2\}\{1\})$ is amenable and has an $i$-dimensional face $\tau$ lying in $S_K$ minimizing $f(\sigma)-f(\sigma')$ among all such $\tau$.
\end{definition}

 \begin{algorithm}[H]
  \caption{Critical}\label{alg: critical}
  \begin{algorithmic}
 \State \verb"Input:" A critical simplex $\sigma^{(n)}$ in $V_X$ and compatible discrete Morse function $f\colon X\to \mathbb{R}$ 
 \State \verb"Output:" A matching on $\sd(\sigma)^{\circ}$ with exactly one critical $n$-simplex 
\end{algorithmic}
\begin{algorithmic}[1]
\ForAll {$\sigma \in K$}
\State $i=\mathrm{ht}(\sigma)$

\If {$i=n$} 
\State choose any vertex ordering of $\sigma$
\Else { choose an admissible vertex ordering of $\sigma$ }
\EndIf
\State Set $\alpha\in \mathrm{sd}(\sigma)^{\circ}$ to be the $n$-simplex having label $$\lambda(\alpha)=(\{n+1\}\{n\}\ldots \{2\}\{1\})$$
\State Leave $\alpha$ unpaired
\ForAll {$\gamma\in \mathrm{sd}(\sigma)^{\circ}$ with $\gamma\neq \alpha$ unpaired and $\lambda(\gamma)=(I_1\ldots I_{k+1})$ }
 \State let $\ell=$ length of longest common suffix of $\lambda(\gamma)$ and $\lambda(\alpha)$
 \State let vertex $v$ have label $\ell+1$
 \State find $I_j$ containing vertex $v$
\If {$I_j=\{v\}$ is a singleton} 
 \State pair with simplex having label $(I_1\ldots I_{j-1}(I_j\cup I_{j+1})I_{j+2}\ldots I_{k+1})$
\Else { pair with simplex having label $(I_1\ldots I_{j-1}\{v\}(I_j\setminus \{v\})I_{j+1}\ldots I_{k+1})$}
\EndIf
\EndFor
\EndFor
  \end{algorithmic}
\end{algorithm}

\begin{remark}
    Note that all simplices $\gamma$ in the boundary of the critical cell $\alpha$ are paired with something of larger dimension outside of $\alpha$ by this algorithm. Thus, the $i$-simplex $\tau$ lying in $S_K$ is paired with an $(i+1)$-simplex, which cannot lie in $S_K$ by definition.
\end{remark}

 \begin{algorithm}[H]
  \caption{RegularPair}\label{alg: regularpair}
  \begin{algorithmic}
\State \verb"Input:" A regular pair $(\alpha^{n-1}, \beta^n)$, gradient vector field $V_K$, and compatible discrete Morse function $f\colon X\to \mathbb{R}$ 

\State\verb"Output:" A matching on $\sd(\alpha)^{\circ}\cup \sd(\beta)^{\circ}$
\end{algorithmic}
\begin{algorithmic}[1]
\State Let $S=\{\alpha' : \alpha' \subseteq \beta, \alpha'\neq \alpha, \alpha' \text{ is a tail of a pair in } V_K\}$
\If {$S=\emptyset$} 
\State choose any ordering satisfying $\alpha=\{1,2, \ldots, n\}$
    \ElsIf {$S\neq \emptyset$}
    \State find $\alpha'\in S$ maximizing $f(\beta)-f(\alpha')$ and choose ordering $\alpha'=\{1,2, \ldots, n\}$
    \ForAll {$\gamma\in \sd(\alpha)^{\circ}$ with label $\lambda(\gamma)=(I_1\ldots I_{k+1})$} 
    \State pair $\gamma$ with simplex that has label $(I_1 \ldots I_{k+1} \{n+1\})$
    \EndFor
    \ForAll {unpaired $\gamma\in \sd(\beta)^{\circ}$ with label $\lambda(\gamma)=(I_1\ldots I_{k+1})$} 
    \State find $I_j$ containing vertex $n+1$
    \If {$I_j=\{n+1\}$ is a singleton at last position} 
    \State pair with $(I_1 \ldots I_k)$
    \ElsIf {$I_j=\{n+1\}$ is a singleton not at last position} 
    \State pair with $(I_1 \ldots I_{j-1} (I_j\cup I_{j+1})I_{j+2} \ldots I_{k+1})$
    \ElsIf {$I_j$ is not a singleton containing $n+1$} 
    \State pair with $(I_1 \ldots I_{j-1} \{n+1\} (I_j\setminus \{n+1\})I_{j+1} \ldots I_{k+1})$
    \EndIf
    \EndFor
\EndIf
\If {$S\neq \emptyset$} 
    \State let $b=v_{\beta}$ and $z=v_{\alpha'}$
    \State remove pairs: $(b, (b, v_{n+1}))$ and $((b,z), (b,z, v_{n+1}))$
    \State add pairs: $(b, (b,z))$ and $(v_{n+1}, (b,z, v_{n+1}))$
 \EndIf
  \end{algorithmic}
\end{algorithm}

\end{document}